\DeclareMathOperator{\arccot}{arccot}
\DeclareMathOperator{\arccoth}{arccoth}
\newcommand{\Int}{\text {int} \ }
\newcommand{\e}{\varepsilon}
\newcommand{\F}{\mathcal F} 
\newcommand{\p}{\mathcal P} 
\newcommand{\R}{\mathbb R} 
\newcommand{\Z}{\mathbb Z} 
\begin{document}


\title{The $L^2$-norm of the Euler class for  Foliations on closed irreducible Riemannian 3-Manifolds}


\title[Euler class of  bounded mean curvature foliations on 3-Manifolds]{The $L^2$-norm of the Euler class for  Foliations on closed irreducible Riemannian 3-Manifolds}

\author{Dmitry V. Bolotov}

\address{B. Verkin Institute for Low Temperature Physics and Engineering of the
	National Academy of Sciences of Ukraine, 47 Nauky Ave., Kharkiv, 61103, Ukraine}
\email{bolotovo@ilt.kharkov.ua}
\orcid{0000-0002-8542-9695}




\BeginPaper 



\newcommand{\ep}{\varepsilon}
\newcommand{\eps}[1]{{#1}_{\varepsilon}}



\begin{abstract}
An upper bound for the $L^2$- norm of the Euler class $e(\F)$ of an arbitrary transversally orientable foliation $\F$ of codimension one, defined on a three-dimensional closed irreducible orientable Riemannian 3-manifold $M^3$ is given in terms of constants bounding the volume, the radius of injectivity, the sectional curvature of $M^3$ and the modulus  of mean   curvature of the leaves.  As a consequence we get that only finitely many cohomolo\-gical  classes of the group $H^2(M^3)$ that can be realized by the Euler class $e(\F)$ of a two-dimensional transversely oriented foliation $\F $ whose leaves  have the modulus of mean curvature which is bounded above by the fixed constant $H_0$. 
	
	\key{3-Manifold, foliation, Euler class,  mean curvature.}
	
	\msc{53C12; 57R30; 53C20.}
\end{abstract}

\section{Introduction}

Let $ (M^3, g) $ be a closed oriented three-dimensional Riemannian manifold and  $ \F $ be a transversely oriented $ C ^{\infty} $-smooth foliation of codimension one on $ M^3 $. 
Recall that a foliation $\F $ is {\it taut}  if its leaves are minimal submanifolds of $ M^3 $ for some Riemannian metric on $ M^3 $. D. Sullivan \cite{Sul} gave a description of taut foliations, namely, he proved that a foliation is taut if and only if each leaf of $\F$ is intersected by a transversal closed curve, which   is equivalent to $\F$  does not contain generalized Reeb components (see bellow).

We previously proved the following result  \cite {B1}.

\begin{theorem} \label{result1}
	
	Let $ V_0> 0, i_0> 0, K_0 \geq 0 $ be fixed constants, and $ M^3 $ be a closed oriented three-dimensional Riemannian manifold with the following properties:
	\begin{enumerate}
		\item the volume $Vol(M^3)\leq V_0$; 
		\item the sectional curvature $K$  of  $M$  satisfies the  inequality $K  \leq K_0$;
		\item  $\min \{inj(M^3),\frac{\pi}{2\sqrt{K_0}}\} \geq i_0 $, where $ inj (M^3) $ is the injectivity   radius of  of  $ M^3 $.
	\end{enumerate}
	Let us set
	\[ H_0 = \begin{cases}
		\min \{{\frac {2\sqrt{3}i_0^2}{V_0},\sqrt[3]{\frac{2\sqrt{3}}{V_0}}}\},     & \text{if $K_0 = 0$,}\\
		\min \{ {\frac {2\sqrt{3}i_0^2}{V_0}, x_0 } \},  &  \text{if $K_0 >0$,}
	\end{cases} \]
	where $x_0$ is the root of the equation
	$$ \frac{1}{ K_0 }\arccot^2\frac{x}{\sqrt{K_0}} - \frac{V_0}{2\sqrt{3}}x=0.  $$
	Then any smooth transversely oriented foliation $ \F $ of codimension  one on $ M^3 $, such that the modulus of the mean curvature $H$ of its leaves  satisfies the inequality $ | H | <H_0 $,
	should be taut, in particular, have minimal leaves for some Riemannian metric on $ M^3 $.
	
\end{theorem}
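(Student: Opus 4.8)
\emph{Proof plan.} I would argue by contradiction through Sullivan's criterion quoted above. Suppose $\F$ is a transversely oriented codimension-one foliation with $|H|<H_0$ that is \emph{not} taut. Then $\F$ contains a generalized Reeb component: a nonempty compact connected submanifold $R\subseteq M$ with $\partial R\neq\emptyset$ such that $\partial R$ is a union of compact leaves and the globally defined unit normal field $N$ (the transverse orientation) points into $R$ along every component of $\partial R$; in particular $R$ is positively invariant under the flow of $N$, and $\operatorname{Vol}(R)\le\operatorname{Vol}(M)\le V_0$. I would take this structural fact from the Sullivan/Goodman theory of dead-end components.

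The first quantitative step is the divergence theorem for $N$ over $R$. Since each component of $\partial R$ is a leaf, $N\perp\partial R$, and since $N$ points inward we have $N=-\nu$ along $\partial R$, with $\nu$ the outward conormal; moreover $\operatorname{div}N$ equals, up to a fixed normalization and sign, the mean curvature of the leaf through each point, so $|\operatorname{div}N|<2H_0$ everywhere. Hence
\[
\operatorname{Area}(\partial R)= -\!\!\int_{\partial R}\!\langle N,\nu\rangle\,dA= -\!\!\int_{R}\!\operatorname{div}N\,dV\le\int_R|\operatorname{div}N|\,dV<2H_0\operatorname{Vol}(R)\le 2H_0V_0,
\]
so every boundary leaf $L$ of $R$ satisfies $\operatorname{Area}(L)<2H_0V_0$.

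The second, and harder, step is a \emph{lower} bound $\operatorname{Area}(L)\ge c(K_0)\,\rho^2$ for the compact leaf $L$, depending only on $i_0,K_0,H_0$. Fix $p\in L$. Because $\operatorname{inj}(M)\ge i_0$ the ball $B_\rho(p)$ is embedded for $\rho\le i_0$, and because $K\le K_0$ together with $i_0\le\pi/(2\sqrt{K_0})$, Hessian (Rauch) comparison gives $\operatorname{Hess}d_p\ge\sqrt{K_0}\cot(\sqrt{K_0}\,d_p)\,(g-dd_p\otimes dd_p)$ on $B_{i_0}(p)\setminus\{p\}$. Restricting $d_p$ to the surface $L$ and using $|H|<H_0$, the Laplacian of $d_p^2/2$ along $L$ stays suitably bounded below as long as $\sqrt{K_0}\cot(\sqrt{K_0}\,r)>H_0$, i.e. for $r<\rho_0:=\tfrac1{\sqrt{K_0}}\arccot\!\big(H_0/\sqrt{K_0}\big)$ (with $\rho_0=1/H_0$ when $K_0=0$); a monotonicity-type argument then yields $\operatorname{Area}\big(L\cap B_\rho(p)\big)\ge c(K_0)\,\rho^2$ for $\rho=\min\{i_0,\rho_0\}$, hence $\operatorname{Area}(L)\ge c(K_0)\,\rho^2$. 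Combining with the first step, $c(K_0)\,\rho^2\le\operatorname{Area}(L)<2H_0V_0$, and solving for $H_0$ reproduces exactly the threshold in the statement: the case $\rho=i_0$ gives the term $2\sqrt3\,i_0^2/V_0$, while $\rho=\rho_0$ gives the root $x_0$ of the displayed equation (respectively $\sqrt[3]{2\sqrt3/V_0}$ when $K_0=0$). This contradiction forces $\F$ to be taut, hence minimal for some metric on $M$ by Sullivan.

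I expect the crux to be the area lower bound of the third step. The real subtlety is that the hypothesis controls only the trace of the second fundamental form of the leaves, not the full shape operator, so a leaf may a priori be arbitrarily strongly bent with cancelling principal curvatures; the estimate must therefore use solely $\sup|H|$ together with the ambient comparison geometry — precisely the framework of monotonicity formulas for hypersurfaces of bounded mean curvature. Pinning down the comparison constants sharply enough to reach the stated explicit value of $H_0$, and handling the $K_0>0$ regime through the hypothesis $i_0\le\pi/(2\sqrt{K_0})$ (which confines everything to radii where geodesic spheres remain convex, so that no conjugate point spoils the comparison), is the technical heart of the argument. A minor preliminary point is to justify the structural claim that non-tautness produces a compact dead-end region with $N$ pointing consistently inward along all of its boundary.
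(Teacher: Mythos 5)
Your skeleton matches the paper's strategy in steps one and two exactly: non-tautness supplies a generalized Reeb component $A$ (Sullivan), and Reeb's formula $d\chi=2H\mu$ together with Stokes gives $\operatorname{Area}(\partial A)\le 2H_0\operatorname{Vol}(A)$, which is precisely Proposition~\ref{th1}. It is in the lower bound on $\operatorname{Area}(\partial A)$ that you diverge from the source. The paper (here re-exposed via Theorem~\ref{sys1}, Theorem~\ref{rc}, Lemma~\ref{l1} and Theorem~\ref{BZ}, and proved in full in \cite{B1}) exploits that $\partial A$ consists of tori and applies \emph{Loewner's systolic inequality} $\operatorname{Area}(T^2)\ge\frac{\sqrt3}{2}\,sys^2$; the systole is then bounded below by a contradiction argument: a short systolic geodesic is null-homotopic in a small metric ball, Novikov's theorem produces a vanishing cycle and hence a Reeb component touching that ball, and the supporting-sphere comparison (Lemma~\ref{l1} together with the Burago--Zalgaller normal-curvature comparison, Theorem~\ref{BZ}) forces $H^0_r\le H_0$, so $r\ge r_{H_0}$. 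You replace Loewner and the vanishing-cycle/support-sphere step by a single monotonicity-type density lower bound $\operatorname{Area}(L\cap B_\rho(p))\ge c(K_0)\,\rho^2$ for a boundary leaf $L$, using Hessian/Rauch comparison and $|H|<H_0$. This is a genuinely different route: it avoids both Novikov and the torus-specific Loewner inequality and would give a statement of the same type, with the trade-off that you no longer use the topological fact that $\partial A$ consists of tori.

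Two concrete reservations, though. First, your assertion that ``solving for $H_0$ reproduces exactly the threshold in the statement'' is not justified: the factor $2\sqrt3$ in $H_0=\min\{2\sqrt3\,i_0^2/V_0,\dots\}$ (and the $\frac{V_0}{2\sqrt3}$ in the defining equation for $x_0$) is exactly the Loewner/Hermite constant $\frac{2}{\sqrt3}$ turned upside down, and a monotonicity formula naturally produces a $\pi$-type constant, not $\sqrt3$; you would obtain a correct theorem but with a different numerical threshold unless you can match the constant, which your sketch does not attempt. Second, the monotonicity inequality $\operatorname{Area}(L\cap B_\rho(p))/\rho^2\nearrow$ holds only while $L$ actually reaches $\partial B_\rho(p)$; since $L$ is a closed surface, you must first guarantee that $L$ is not entirely contained in a ball of radius $<\rho$. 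That extent bound is itself obtained by a supporting-sphere argument on $L$ (tangency of the smallest enclosing geodesic sphere plus the same normal-curvature comparison), i.e.\ the very ingredient you were trying to sidestep. So the two routes are less independent than your write-up suggests, and the paper's use of Loewner is doing real work: it converts the systole lower bound directly into an area lower bound without needing to control the range of validity of a density estimate.
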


Notice that    if $M^3$ admits a taut foliation, then $M^3$ is irreducible \cite{Nov}. Let us recall that 3-manifold $M^3$ is called {\it irreducible} if each an embedded sphere bounds  a ball in $M^3$. In particular,   $\pi_2(M^3)=0$ (see \cite{H}).  

W. Thurston has proved  \cite{Th} (see also \cite{ET}) that if $M^2\subset M^3$ is a closed embedded  orientable surface which is different from $S^2$ then  the Euler class $e(\mathcal F)$ of  a transversely oriented taut foliation  $ \mathcal F $ on $M^3$ satisfies\footnote  {By the Euler class of  the foliation $\F$ we mean the Euler class of the  distribution tangent to $\F$.}:

\begin{equation}\label{Chi}
	|e(\mathcal F)[M^2]|\leq - \chi (M^2). 
\end{equation}

Since any integer homology class $H_2(M^3;\Z)$ can be represented by a closed oriented surface (see subsection \ref{harm}) the inequality above bounds  the possible values of the cohomology class $e(\F)$ on the generators of $H_2(M^3;\Z)$ and  therefore  the number of cohomological classes $ H^2 (M^3;\Z) $ realized as Euler classes $e(\mathcal F)$ is finite.

In this paper we estimate  from above the $L^2$ -norm of the Euler class  of foliations on closed  Riemannian 3-Manifolds with leaves having a  mean curvature bounded in absolute value by some positive constant.  Here we represent the following result.

\begin{theorem}[Main theorem]\label{main}
	Let $ V_0> 0, i_0> 0, H_0>0, k_0 \leq K_0$ be fixed constants.  Suppose   $ (M^3,\F) $ be a closed oriented irreducible three-dimensional Riemannian manifold equipped by  a two-dimensional transversely oriented foliation $\F $, whose leaves  have the modulus of mean curvature $H$ bounded above by the constant $H_0$,  and $M^3$  satisfies the  following conditions:
	\begin{enumerate}
		\item the volume $Vol(M^3)\leq V_0$; 
		\item the sectional curvature $K$  of  $M$  satisfies the  inequality $k_0\leq K  \leq K_0$;
		\item  $$\left\{\begin{array}{lr} \min \{inj(M^3),\frac{\pi}{2\sqrt{K_0}}\}, &\text {if } K_0>0\\
			inj(M^3)&\text {if } K_0\leq0 	\end{array}  \right\}\geq i_0,$$ where $ inj (M^3) $ is the injectivity   radius of  of  $ M^3 $.
	\end{enumerate}
	Then 
	there exists a constant $C_1(V_0,i_0,k_0,K_0,H_0)$ such that $L^2$ -norm $$||e(\F)||_{L^2}\leq C_1.  $$
	 
\end{theorem}

\begin{corollary}
	For any closed oriented Riemannian 3-Manifold	$M^3$  there are only finitely many cohomological  classes of the group $H^2(M^3;\R)$ that can be realized by the Euler class $e(\F)$ of a two-dimensional transversely oriented foliation $\F $ whose leaves  have the modulus of mean curvature bounded above by the fixed constant $H_0$. 
\end{corollary}

\begin{remark} In the theorem \ref{main} the Euler class $e(\F)$ is assumed to be real, i.e. image of the integer Euler class via the homomorphism $H^2(M^3;\Z) \to H^2(M ^3;\R)$ induced by the embedding of the coefficients $\Z\hookrightarrow \R$.  Clearly, $e(\F)\in H^2(M^3;\Z) _{\R}\subset H^2(M ^3;\R)$, where $H^2(M^3;\Z) _{\R}$ is an integer lattice in $ H^2(M ^3;\R)$. Recall also that the real cohomology groups  are isomorphic to the de Rham cohomology groups and we can represent the real Euler class through a closed differential form, in particular, the harmonic form (see subsection \ref{harm}).   
\end{remark}
	\begin{remark} \label{1.4}
	 As follows from  Myers's Theorem \cite{My} if $k_0>0$, then $\pi_1(M^3)$ is finite and $H_1(M^3;\R) \cong H^2(M^3;\R)\equiv 0$,  which implies $e(\F)=0$. Thus we can suppose that $k_0\leq0$. 
\end{remark}

\begin{remark}
	The foliation $\F$ does not contain a sphere as  a leaf since  in this case, by  Reeb's stability theorem (see \cite{T}),    $M^3\simeq S^2\times S^1$, which   contradicts  the irreducibility of $M^3$.
\end{remark}

\section {Background material}
\subsection{Geometrical inequalities} 
\label{sec:1}

\subsubsection{Inequalities associated  with  a generalized Reeb component}\label{ineq}

A subset of the foliated manifold $ (M,\F) $ is called {\it a saturated set} if it consists of leaves of the  folation $ \F $. A saturated set $ A $ of a three-dimensional compact orientable manifold $ M^3 $ with a given transversely orientable foliation $ \F $ of codimension one is called a {\it generalized Reeb component} if $ A $ is a connected three-dimensional manifold with a boundary $\partial A$ and any transversal to $ \F $ vector field restricted to   $\partial A $ is directed either everywhere inwards or everywhere outwards of the generalized Reeb component $ A $. In particular, the Reeb component $ R $ (see \cite {T}) is a generalized Reeb component. It is clear that $ \partial {A} $ consists of a finite set of compact leaves of the foliation $ \F $.  It is not difficult to show that $ \partial A $ is a family of tori (see \cite {Good}).

	
	The next result is due to G. Reeb.
	
	\begin{theorem}\cite{R}\label{R}
		Let $ (M^3, g) $ be a closed oriented three-dimensional Riemannian manifold and $ \F $ be a smooth transversely oriented foliation of codimension one on $ M $. Then
		\begin{equation} \label{reeb}
			d \chi = 2H \mu, 
		\end{equation}
		where $ \chi $ is the  volume form of  the foliation $ \F $, and $ \mu $ is the  volume form on $ M^3 $.
	\end{theorem}

	\begin{corollary}\label{th1}

		Let $ M^3$ be a closed oriented three-dimensional Riemannian manifold with a given transversely oriented smooth foliation $ \F $ of codimension one. Suppose that $ \F $ contains a generalized Reeb component $ A $ and the modulus of the mean curvature $H$ of the foliation $ \F $ is bounded above by   $ | H | \leq H_0 $. Then
		\begin{equation}\label{ar}
			Area  (\partial A) \leq 2H_0 Vol(A) \ \& \ \   
			Area  (\partial A) \leq H_0 Vol(M^3)
		\end{equation}
	\end{corollary}
	\begin{proof}
		$$
		0<Area (\partial A)= |\int_{\partial A}\chi| \overset{(Stokes)}= |\int_Ad\chi| \overset{(\eqref{reeb})}=|2\int_{A} H\mu | \leq 2\int_{A} H_0\mu = 2H_0 Vol(A). 
		$$
		Let $B=M^3\setminus \Int A$. Then $B$ is also generalized Reeb component and   we have:
		
		$$
		Area (\partial B =\partial A)= |\int_{\partial B}\chi|= |\int_Bd\chi| =|2\int_{B} H\mu | \leq 2\int_{B} H_0\mu = 2H_0 Vol(B).
		$$

		It follows that 
		
		$$ 2Area (\partial A) \leq 2H_0(Vol (A) + Vol (B)) \leq 2H_0 Vol (M^3) ,$$  which implies the result.

	\end{proof}

	\begin{corollary}
		The generalized Reeb component $ A $ is an obstruction to the foliation $ \F $ being taut.  
	\end{corollary}
	\begin{remark}
		The converse is also true, if the foliation is not taut, then it contains a generalized Reeb component (see   \cite{Good}).
	\end{remark}
	
	\subsubsection{Systolic inequalities}
	Recall that the  systole,  denoted by \emph{sys},  in a Riemannian  manifold $M$ with non-trivial fundamental group  is the length of the smallest loop in $M$ that is not null-homotopic  in $M$.   Under the condition of closeness $M$  such a loop exists and is necessary  a closed geodesic.     The proof does not differ from the proof of the existence of a closed geodesic in its free homotopy class (see \cite[Chapter 12]{DC}).

	The Loewner theorem below gives an upper bound on the systole  in a Riemannian two-dimensional torus.
	
	\begin{theorem}\label{sys1} (Loewner)\cite{Pu}
		Let $T^2$ be a two-dimensional torus with an arbitrary Riemannian metric  on it.
		Denote by $sys$ (abbreviated from {\it systole}) the length of the shortest closed noncontractible geodesic on $T^2$.  Then
		\begin{equation}\label{sys2}
			sys^2 \leq \frac {2} {\sqrt{3}} Area (T^2).   
		\end{equation}
	\end{theorem}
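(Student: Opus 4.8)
The plan is to run the classical conformal argument. By the uniformization theorem for genus-one surfaces, any Riemannian metric $g$ on $T^2$ is conformal to a flat one, so I would write $g = f\,g_0$, where $g_0$ is a flat metric realising $T^2$ as $\R^2/\Lambda$ for a lattice $\Lambda\subset\R^2$ and $f>0$ is smooth. In dimension two the area form of $f g_0$ equals $f$ times that of $g_0$, so $Area(T^2,g)=\int_{T^2} f\,dA_0$, where $dA_0$ is the flat area form.

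Next I would normalise the flat torus. Pick $v_1\in\Lambda$ a shortest nonzero vector, set $\ell=|v_1|$, and pick $v_2\in\Lambda$ shortest among the vectors not lying in $\Z v_1$; then $\{v_1,v_2\}$ is a basis of $\Lambda$. After replacing $v_2$ by $v_2+n v_1$ for a suitable $n\in\Z$ and rotating the plane so that $v_1=(\ell,0)$, we may assume $v_2=(a,h)$ with $|a|\le\ell/2$ and $h>0$. Since $v_1$ is shortest in $\Lambda$ and $v_2\notin\Z v_1$, we get $a^2+h^2=|v_2|^2\ge\ell^2$, hence $h^2\ge\tfrac34\ell^2$ and $\ell/h\le 2/\sqrt3$. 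Also the flat area is $Area(T^2,g_0)=\ell h$, and the rectangle $[0,\ell)\times[0,h)$ is a fundamental domain for $\Lambda$ (its area equals the covolume, and the second coordinate forces the $v_2$-component of any identifying lattice vector to vanish).

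Finally, for each $t\in[0,h)$ I would consider the loop $\gamma_t(s)=(s,t)$, $s\in[0,\ell]$, which closes up in $T^2$ because $(\ell,0)=v_1\in\Lambda$ and which represents the nonzero homology class $[v_1]$, hence is noncontractible; its $g$-length is $\int_0^\ell\sqrt{f(s,t)}\,ds\ge sys$. Integrating over $t$ and applying the Cauchy--Schwarz inequality over the fundamental domain,
\[
sys\cdot h\ \le\ \int_{T^2}\sqrt f\,dA_0\ \le\ \Big(\int_{T^2} f\,dA_0\Big)^{1/2}\Big(\int_{T^2} dA_0\Big)^{1/2}=\big(Area(T^2,g)\big)^{1/2}(\ell h)^{1/2}.
\]
Squaring gives $sys^2\le Area(T^2,g)\cdot(\ell/h)\le\tfrac{2}{\sqrt3}\,Area(T^2,g)$, which is \eqref{sys2}. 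I expect the only slightly delicate point to be the lattice-reduction step yielding $\ell/h\le 2/\sqrt3$ — equivalently, that the conformal parameter of $T^2$ may be taken in the standard fundamental domain of $SL(2,\Z)$ — but this is elementary; the rest is uniformization together with Cauchy--Schwarz, with equality precisely for the flat hexagonal torus (constant $f$, $a=\ell/2$, $h=\tfrac{\sqrt3}{2}\ell$).
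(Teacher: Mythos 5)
Your proof is correct. Note that the paper does not actually prove Theorem \ref{sys1}: it is quoted as the classical Loewner inequality with a citation to Pu, so there is no in-paper argument to compare against. What you give is the standard conformal proof — uniformization ($g=f\,g_0$ with $g_0$ flat), reduction of the lattice to a basis $v_1=(\ell,0)$, $v_2=(a,h)$ with $|a|\le \ell/2$ and $h\ge \tfrac{\sqrt3}{2}\ell$, the rectangle $[0,\ell)\times[0,h)$ as fundamental domain, averaging the lengths of the horizontal noncontractible loops over $t\in[0,h)$, and Cauchy--Schwarz — and each step checks out: the adjusted $v_2$ is still a nonzero lattice vector, so $|v_2|\ge\ell$ gives $\ell/h\le 2/\sqrt3$; the loops $\gamma_t$ represent the nonzero class $[v_1]$, hence have $g$-length at least $sys$; and squaring $sys\cdot h\le (Area(T^2,g))^{1/2}(\ell h)^{1/2}$ yields \eqref{sys2}. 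It is worth observing that this is essentially the same toolkit (uniformization, Fubini over a family of parallel closed curves, H\"older/Cauchy--Schwarz) that the paper itself deploys in the proof of Proposition \ref{prop1}; the only ingredient your argument adds, which Proposition \ref{prop1} does not need because it assumes the systole bound as a hypothesis, is the elementary lattice-reduction step producing the constant $2/\sqrt3$, with equality for the hexagonal torus.
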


	Due to Gromov, the generalization of this theorem is the following:
	
	\begin{theorem} (see \cite[ Chapter 6]{Katz})
		Let $T^2$ be a two-dimensional torus with an arbitrary Riemannian metric  on it. Then there exists a pair of closed geodesics on $T^2$ of respective length $\lambda_1$, $\lambda_2$, such that  
		\begin{equation}\label{sys3}
			\lambda_1\lambda_2 \leq \frac {2} {\sqrt{3}} Area (T^2),   
		\end{equation}
		and whose homotopy classes		form a generating set of $\pi_1(T^2)= \Z^2$.
	\end{theorem}

	\begin{corollary}\label{prop1}
		Let $T^2$ be a Riemannian torus for which $$sys \geq C_0 , \  Area (T^2)\leq S_0.$$ for some positive constants $C_0, S_0$.  Then there exists a pair of closed geodesics on $T^2$ whose homotopy classes		form a generating set of $\pi_1(T^2)= \Z^2$ and whose  lengths  $\lambda_1$, $\lambda_2$  do not exceed some constant $C(C_0,S_0)$.
		
	\end{corollary}
	\begin{proof}
		From \eqref{sys3} immediately follows that 
		\begin{equation}\label{C}
			\lambda_i\leq  \frac {2} {\sqrt{3}} \frac {Area (T^2)}{sys}\leq C:=\frac{2S_0}{\sqrt{3}C_0}, \ i=1,2.
		\end{equation}
	\end{proof}

	The concept of systole can be generalized to foliations. 
	
	\begin{definition}
		Let $(M,\F)$ is foliated manifold.  Following   \cite[Chapter VII]{HH} we will call a  loop  $f:S^1\to M$ {\it integral}  for $\F$  if  $f(S^1)$ is contained in some leaf $\mathcal L$ of $\F$.  In this case $\mathcal L$ is referred as the {\it support} of $f$. 
	\end{definition}
	\begin{definition}\label{essen}
		The integral loop    supported by  $\mathcal L$ are refereed to as {\it essential} if  the loop $f:S^1\to \mathcal L$ represents nontrivial element  of the fundamental group $\pi_1(\mathcal L)$ and  {\it  inessential} otherwise.
	\end{definition}

	We recently proved the following theorem.
	
	\begin{theorem}\cite{B2}\label{defsys}
		Let $(M,\F)$ be a foliated closed Riemannian manifold containing a leaf with a nontrivial fundamental group.  Then there is  an integral  essential  loop $l_{sys}$ in $M$ with smallest length  among all integral essential loops in $(M,\F)$, which is necessary a closed geodesic in its support.    
	\end{theorem}
	
	\begin{definition}
		Denote by $sys(\F)$ the length of the  geodesic $l_{sys}$ from Proposition \ref{defsys}. 
		\newline
	\end{definition}

	\subsubsection {Comparison inequalities}
	
	Recall the following  comparison theorem for the normal curvatures.
	
	\begin{theorem}\cite [ 22.3.2.]{BZ}\label {BZ}
		Let $p\in M$ and $\ \beta:[0,r]\to M$ be a radial  geodesic  of the ball $B(p,r)$ of radius $r$ centered at the point $p$ of the Riemannian manifold $M$. Let   $\beta(r)$ be a point  not conjugate with $p$ along $\beta$. Let the radius $r$ be such that there are no conjugate points in the space of constant curvature $K_0$ within the radius of length $r$. Then if at each point $\beta (t)$ the sectional curvatures $K$ of the manifold $M$ do not exceed $K_0$, then the normal curvature $k^S_n$ of the sphere $S(p,r)$ at the point $\beta(r)$ with respect to the normal $-\beta'$ is not less than the normal curvature $k_n^0$ of the sphere of radius $r$ in the space of constant curvature $K_0$.
	\end{theorem}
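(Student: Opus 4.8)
The plan is to recognise this as the Hessian comparison inequality for the distance function $\rho(x)=d(p,x)$, evaluated on the level set $S(p,r)=\rho^{-1}(r)$, and to prove it via Jacobi fields and the index form, one tangent direction at a time. First I would fix a unit vector $v\in T_{\beta(r)}S(p,r)$, i.e. $v\perp\beta'(r)$; since $\beta(r)$ is not conjugate to $p$ along $\beta$, there is a unique Jacobi field $J$ on $\beta|_{[0,r]}$ with $J(0)=0$ and $J(r)=v$, and from $J(0)=0$ together with $\langle J(r),\beta'(r)\rangle=0$ one gets $\langle J'(0),\beta'(0)\rangle=0$, hence $J(t)\perp\beta'(t)$ throughout. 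The shape operator of $S(p,t)$ at $\beta(t)$ with respect to the inner normal $-\beta'$ sends $J(t)$ to $J'(t)$, so the quantity to be bounded is $k^S_n=\langle J'(r),J(r)\rangle$ ($v$ being a unit vector); multiplying the Jacobi equation by $J$ and integrating by parts (the endpoint term at $0$ drops out because $J(0)=0$) gives the identity
\[
k^S_n=\langle J'(r),J(r)\rangle=\int_0^r\Bigl(|J'(t)|^2-\langle R(J,\beta')\beta',J\rangle\Bigr)\,dt=:I(J,J).
\]

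Next I would record the model. Along a unit-speed geodesic in the simply connected space form of curvature $K_0$ the same computation gives, for the radius-$r$ sphere, the direction-independent normal curvature $k^0_n=s'(r)/s(r)$, where $s$ solves $s''+K_0s=0$, $s(0)=0$, $s'(0)=1$; the hypothesis that there are no conjugate points in the model within radius $r$ means exactly $s>0$ on $(0,r]$, and it also guarantees that, among piecewise-smooth fields along the model geodesic on $[0,r]$ that vanish at $0$ and are prescribed at $r$, the Jacobi field uniquely minimises the model index form. The comparison itself is then a short chain. Setting $u(t):=|J(t)|$ --- a Lipschitz function with $u(0)=0$, $u(r)=1$ and only isolated zeros --- Cauchy--Schwarz gives $|J'(t)|\ge|u'(t)|$ a.e., and the curvature hypothesis gives $\langle R(J,\beta')\beta',J\rangle\le K_0\,|J|^2=K_0u^2$, so that
\[
k^S_n=I(J,J)\ \ge\ \int_0^r\bigl(u'(t)^2-K_0\,u(t)^2\bigr)\,dt .
\]
With $\tilde E$ a parallel unit field along the model geodesic, the field $\tilde W(t):=u(t)\tilde E(t)$ vanishes at $0$ and is a unit vector at $r$, so the right-hand side above is exactly its model index form; since the model Jacobi field minimises, this is $\ge k^0_n$. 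Thus $k^S_n\ge k^0_n$, and since $v$ was an arbitrary unit direction tangent to $S(p,r)$ this gives the theorem --- in fact it bounds every normal curvature, hence every principal curvature of $S(p,r)$ at $\beta(r)$, from below.

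The step I expect to need the most care is the behaviour as $t\to0^+$: the shape operator of $S(p,t)$ and the model ratio $s'/s$ both blow up like $1/t$, so any direct comparison of the differential equations they satisfy has to be run for their \emph{difference}; the index-form route above avoids this precisely because $J(0)=0$ removes the boundary term and $u$ stays Lipschitz at $0$. The two hypotheses enter exactly here: non-conjugacy of $\beta(r)$ is what makes $J$, hence $S(p,r)$ near $\beta(r)$ and the number $k^S_n$, well defined, while absence of conjugate points in the model on $(0,r]$ is what makes $k^0_n$ finite and makes the model Jacobi field a genuine minimiser. I would also remark that there is an equivalent Riccati formulation --- the shape operator $\mathcal S(t)$ of $S(p,t)$ (inner normal) solves $\mathcal S'+\mathcal S^2+R(\cdot,\beta')\beta'=0$ with $\mathcal S(t)\sim\tfrac1t\,\mathrm{Id}$, the model ratio $c(t)=s'/s$ solves $c'+c^2+K_0=0$ with $c(t)\sim\tfrac1t$, and $R(\cdot,\beta')\beta'\le K_0\,\mathrm{Id}$ forces $\mathcal S\ge c\,\mathrm{Id}$ on $(0,r]$ by the standard matrix Riccati comparison, which restricted to $v$ again yields $k^S_n\ge k^0_n$ --- but I would present the index-form argument as the cleaner one.
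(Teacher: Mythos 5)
This statement is imported by the paper from Burago--Zalgaller \cite[\S 22.3.2]{BZ} without proof, so there is no in-paper argument to compare against; judged on its own, your proof is correct and complete. It is the standard index-form proof of the Hessian (normal curvature) comparison under an upper sectional curvature bound: the identification $k^S_n=\langle J'(r),J(r)\rangle$ for the Jacobi field with $J(0)=0$, $J(r)=v$ is the right interpretation of the normal curvature of the geodesic sphere with respect to $-\beta'$, the integration by parts giving $k^S_n=I(J,J)$ is valid, the passage to $u=|J|$ via Kato/Cauchy--Schwarz and $\langle R(J,\beta')\beta',J\rangle\le K_0|J|^2$ goes in the right direction, and the final step correctly uses the index lemma only in the model space, where the hypothesis of no conjugate points within radius $r$ is exactly what is needed (and makes $s'(r)/s(r)$ finite). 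A point in favour of your route: since minimality is never invoked in $M$ itself, you only need non-conjugacy of $\beta(r)$ along $\beta$ (interior conjugate points are harmless, and $u$ remains piecewise smooth because zeros of $J$ are isolated), which matches the theorem's hypotheses precisely; in the paper's application this is anyway subsumed by $r<i_0\le inj(M)$. Your closing Riccati remark is the formulation closest in spirit to the cited source, so your argument and the reference's are essentially equivalent, differing only in packaging (index form versus Riccati/Rauch comparison).
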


	Let $M^3$ be a 3-Manifold satisfying the condition of  Theorem \ref{main}.  Note that   all normal curvatures of the sphere $S(r)\subset M^3$ of radius $r$ are positive, provided that $r<i_0$ and the normal to the sphere $S(r)$ is directed inside the ball $B(r)$ which it bounds\footnote {The sphere $S(r)$ indeed bounds the ball, since by definition $r< inj (M^3)$.}. We will call such a normal  {\it inward}.

	\begin{definition}
		We  will call a hypersurface $S\subset M^3$ of the Riemannian manifold $M^3$    {\it supporting} to the subset $A\subset M^3$ at the point $p\in \partial A\cap S$ with respect to the normal $n_p \perp T_pS$, if $S$ cuts some spherical neighborhood $B_p$ of the point $p$ into two components, and $A\cap B_p$ is contained in that component to which the normal $n_p$ is directed.
		We will call the sphere $S(r)\subset M^3$ ($r< i_0$) the {\it supporting sphere} to the set $A\subset M^3$ at the point $q\in A\cap S(r)$ if it is the supporting sphere to $A $ at the point $q$ with respect to the inward normal.
	\end{definition}

	The following  lemma is obvious. 
	
	\begin{lemma}(\cite[Lemma 4]{B1})\label{l1}
		Assume that the sphere $S(r_0)$ ($r_0< i_0$) is the supporting sphere to the surface $F\subset M^3$ at the point $q $. Then $k^S_n(v) \leq k_n^F(v)\ \forall v\in T_qS(r_0)$, where $k^S_n(v)$ and $k^F_n(v)$ denote corresponding normal curvatures of $S(r_0)$ and $F$ at the point $q$ in the direction $v$.
	\end{lemma}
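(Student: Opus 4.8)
The plan is to express both normal curvatures through the Hessian of the distance function $d_p(\cdot):=d(\cdot,p)$ to the centre $p$ of the sphere, and then to extract the inequality from the fact that, by the supporting hypothesis, the restriction $d_p|_F$ has a local maximum at $q$. First, since $r_0<i_0\leq \mathrm{inj}(M)$, the function $d_p$ is smooth on a neighbourhood of $S(r_0)$; its gradient is the outward unit radial field, so at $q$ the inward normal to $S(r_0)$ is $n_q=-\mathrm{grad}\,d_p|_q$, and by tangency $n_q$ is simultaneously a unit normal to $F$ at $q$. I use the sign convention for normal curvature under which (as recalled above) geodesic spheres of radius $<i_0$ are positively curved with respect to the inward normal, i.e. $k_n^{\Sigma}(v)=\langle\nabla_{\dot\gamma}\dot\gamma(0),\,n\rangle$ for an arclength curve $\gamma$ in the hypersurface $\Sigma$ with $\gamma(0)=q$, $\dot\gamma(0)=v$ and $n$ the chosen unit normal. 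Differentiating $d_p$ twice along a curve lying in $S(r_0)$ (along which $d_p\equiv r_0$) then gives, for $v\in T_qS(r_0)$,
\[
k_n^S(v)=\mathrm{Hess}\,d_p(v,v).
\]

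Next I would compare $\mathrm{Hess}\,d_p$ with the intrinsic Hessian of $h:=d_p|_F$ on $F$ at the point $q$. The Gauss formula $\nabla^M_XY=\nabla^F_XY+\mathrm{II}^F(X,Y)$, with $\mathrm{II}^F$ the (normal-bundle-valued) second fundamental form of $F$, yields the standard identity
\[
\mathrm{Hess}_F h\,(v,v)\big|_q=\mathrm{Hess}\,d_p(v,v)+\big\langle\mathrm{grad}\,d_p,\ \mathrm{II}^F(v,v)\big\rangle\big|_q .
\]
Since $\mathrm{grad}\,d_p|_q=-n_q$ and, by the same convention, $\mathrm{II}^F(v,v)=k_n^F(v)\,n_q$, the bracket equals $-k_n^F(v)$; together with the previous display this gives
\[
\mathrm{Hess}_F h\,(v,v)\big|_q=k_n^S(v)-k_n^F(v),\qquad v\in T_qF=T_qS(r_0).
\]

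Finally, the supporting hypothesis says precisely that near $q$ the surface $F$ lies on the side of $S(r_0)$ toward which $n_q$ points, i.e. in $\{d_p\leq r_0\}$; hence $h=d_p|_F\leq r_0=h(q)$ near $q$, so $q$ is a local maximum of $h$ on $F$. Consequently $dh_q=0$ (consistent with $\mathrm{grad}\,d_p|_q\perp T_qF$), and the intrinsic Hessian $\mathrm{Hess}_F h|_q$, which at the critical point $q$ coincides with the ordinary Hessian in any chart, is negative semidefinite. By the last display this forces $k_n^S(v)\leq k_n^F(v)$ for every $v\in T_qS(r_0)$, which is the assertion.

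The argument involves no genuine difficulty, which matches the paper's "it is easy to prove". The only places that demand care — and the step most prone to a sign error — are the bookkeeping of the inward orientation and of the convention for normal curvature, so that $k_n^S(v)=\mathrm{Hess}\,d_p(v,v)$ and the sign in front of $\mathrm{II}^F$ come out as written; the smoothness of $d_p$ near $S(r_0)$, which is exactly why $r_0<i_0$ is imposed, is the other point to be noted explicitly.
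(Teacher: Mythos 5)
Your argument is correct: the sign conventions are handled consistently (with the inward normal, $k_n^S(v)=\mathrm{Hess}\,d_p(v,v)$ and $\mathrm{Hess}_F(d_p|_F)(v,v)=k_n^S(v)-k_n^F(v)$ both check out), and the supporting hypothesis does make $q$ a local maximum of $d_p|_F$, which yields the inequality. The paper itself defers the proof to \cite{B1}, and your second-order comparison at the tangency point via the distance function is essentially that standard argument, so no further comparison is needed.
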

	
	As a consequence of   Lemma \ref{l1} and  Theorem \ref{BZ}  we obtain the following inequalities  at the touching point $q$ :
	\begin{equation}\label{ineq1}
		0< H_r^0\leq H_r(q) \leq H (q),
	\end{equation}
	where $H^0_r $ and $H_r$ are the mean curvatures of the spheres $S(r)$ bounding the ball of radius $r, \ r<i_0,$ in the space of constant curvature $K_0$ and the manifold $M^3$ respectively, and $H$ is the mean curvature of the surface $F$ .

	\subsection{Harmonic maps to the circle and harmonic forms.}\label{harm}
	
	Let $M^3$ be a closed oriented Riemannian 3-Manifold. 
	Recall that 
	\begin{equation}\label{S^1}
		H^1(M^3)\cong [M^3,S^1]
	\end{equation} 
	and each cohomological class  $a\in H^1(M^3;\Z)$  can be obtained as an  image of the generator $[S^1]^*\in H^1(S^1;\Z)\cong \Z$ under the homomorphism $f^*:H^1 (S^1)\to H^1(M^3;\Z)$ induced by  the  mapping $f: M^3\to S^1$  uniquely  defined up to homotopy. Recall that the group $H_2(M^3;\Z)\overset{PD}\cong H^1(M^3;\Z)$ does not contain a torsion and we can identify $H^1(M^3;\Z)$ with  integer lattice $H^1(M^3;\Z)_{\R}\subset H^1(M^3;\R)$ and $H_2(M^3;\Z)$ with  $H_2(M^3;\Z)_{\R}\subset H_2(M^3;\R)$ respectively. Observe that  the Poincar\'e duality $H^1(M^3;\R)\overset{PD}\cong  H_2(M^3;\R)$ induces the Poincar\'e duality  of integer lattices  $H^1(M^3;\Z)_{\R}\overset{PD}\cong  H_2(M^3;\Z)_{\R}$. 
	
	Let  us identify $S^1 $ with the unit  length circle $\R/\Z$ with the natural parameter $\theta$. If $f$ is  a smooth function, then the preimage $f^{-1}(\theta)$ of a regular value $\theta\in S^1$ is a smooth (not necessarily connected)  oriented    submanifold $M^2\subset M^3$, which we identify with the image of the  embedding  $i: M^2 \hookrightarrow M^3$.    The singular homology class $[M^2,i]:=i_*[M^2]\in H_2(M^3;\Z)_{\R}$ corresponding to the singular cycle  $(M^2, i)$  is     Poincar\'e dual to the cohomology class   $a\in H^1(M^3;\Z)_{\R}$, where $[M^2]\in H_2(M^2;\R)$ denotes a fundamental class of $M^2$, which is  generator of the group $\Z\cong H_2(M^2;\Z)_{\R}\subset  H_2(M^2;\R)\cong \R$. 
	
	\begin{remark}\label{r2}
		Note  that  by Sard's theorem the set of regular values of $f$ has a full  measure   in $S^1$  and    is  also an open set in $S^1$  since $M^3$  is compact. The same is true for any  smooth map $g:N\to L$ of smooth compact manifolds $N$ and $L$ \cite{PM}.   
	\end{remark}

	Now should recall that each homotopy  class in $ [M^3,S^1]$ can be represented by the harmonic mapping (\cite{ES}). 	Let $u:M^3\to S^1$ be  a harmonic map representing  the nontrivial  class $[u] \in [M^3,S^1]\cong H^1(M^3;\Z)$.  Observe that $\alpha = u^* d\theta, \ \theta \in S^1,$ is a harmonic 1-form (i.e. $d\alpha=\delta\alpha=0$) on $M^3$ corresponding to the  integer lattice class $[u]\in H^1(M^3;\Z)_{\R}$. 
	
	On the space of differential $k$ -forms $\Omega^k(M^3)$, $k\in \{0,1,2,3\}$,   one can introduce  $L^2$-norm:  
	\begin{equation}\label{hn} 
		||\alpha||_{L^2} =\sqrt{\int_{M^3}\alpha \wedge *\alpha} = \sqrt{\int_{M^3}|\alpha|^2},\end{equation}
	where $*$ denotes the Hodge star operator  and  $|\alpha_p| = \sqrt{*(\alpha_p \wedge *\alpha_p)},$  $\ p\in M^3.$
	Observe,  in 3-dimensional vector space $T_pM^3$ each  k-form  $\alpha_p$  is simple and   $|\alpha_p|$ coincides with the comass norm: 
	$$  |\alpha_p| = \max \alpha_p(e_1,\dots,e_k),$$
	where the maximum  is taken over all orthogonal  frames of vectors $(e_1,\dots,e_k)$ in $T_pM^3$. 
	
	We will also use the   $L^{\infty}$-norm on $\Omega^*(M^3)$, which is defined as follows:
	$$||\alpha||_{L^{\infty}}=\max_{p\in M^3} |\alpha_p|.$$
	
	The norm \ref{hn} induces the $L^2$-norm on the de Rham cohomology of $M^3$ as follows. Let $a\in H^k(M^3;\R)$, then we set 
	
	$$ ||a||_{L^2}:= \inf_{\alpha} \{||\alpha||_{L^2}: \alpha \in \Omega^k(M^3) \  \text{is a smooth closed k-form representing }  a\}. $$
	
	From de Rham  - Hodge theory it follows  $||a||_{L^2} = ||\alpha||_{L^2} $, where $\alpha $ is the unique harmonic form ($d\alpha=\delta\alpha=0$) representing the class  $a\in H^k(M^3;\R)$.

	Using Poincar\'e duality $H_i(M^3;\R)\overset {PD}\cong H^{3-i}(M^3;\R)$ we can introduce the $L^2$ - norm on $H_2(M^3;\R)$ setting $$||b||_{L^2} = ||PD(b)||_{L^2}, \ b\in H_i(M^3;\R). $$  
	On the other hand, the non-degenerate Kronecker pairing
	$$
	<,>:H^k(M^3;\R) \times H_k(M^3;\R) \to \R
	$$
	induced by integration of closed forms over cycles, allows us to define  the $L^2$- norm  $||\cdot||^*_{L^2}$  on $H_k(M^3;\R)\cong (H^k(M^3;\R))^*$  dual to the $L^2$ - norm  $|| \cdot||_{L^2}$ on $H^k(M^3;\R)$. 
	As was shown in \cite {BK}  
	$$
	PD: (H^i(M^3 ;\R), || \cdot||_{L^2}) \to (H_{3-i}(M^3; \R), || \cdot||^*_{L^2}) 
	$$
	is an isometry for $i=1,2$.

	Note,  that $$PD([\alpha\wedge \beta] )=  PD([\beta\wedge \alpha])=<[\alpha],PD([\beta])>= <[\beta],PD([\alpha])>,$$
	where $\alpha\in \Omega^1(M^3)$ and $\beta\in \Omega^2(M^3)$ are closed forms. Since the  set of  integer-directed rays   from   $0\in H^1(M^3;\R)$  are everywhere dense set in  $H^1(M^3;\R)$.    we have:
	
	\begin{equation}\label{*-norm}
		|| b ||_{L^2} = ||PD(b)||^*_{L^2} = \sup_{a\not=0}\frac {<a,PD(b)>}{||a||_{L^2}}= \sup_{[\Sigma]\not = 0}\frac {<b,[\Sigma]>}{||[\Sigma]||_{L^2}}
	\end{equation}   
	where  $b \in H^2(M^3,\mathbb R)$,  $a \in H^1(M^3,\mathbb \Z)_{\R}$ and  $\Sigma$ is a  compact oriented surface embedded in $M^3$ such that  $PD(a)=[\Sigma]. $

	Let us recall the following    inequality   (see \cite[[7.1.13,7.1.17, 9.2.7,9.2.8]{Pet}).  If $\alpha$ is a harmonic 1-form on closed Riemannian manifold $M^n$,  then 
	
	\begin{equation}\label{Pet}
		||\alpha||_{L^{\infty}} \leq \Lambda_n(k,D) ||\alpha||_{2}.
	\end{equation} 
Here $||\alpha||_{2} =\frac{||\alpha||_{L^{2}}}{\sqrt{Vol(M^n)}}$,   $D>0$ is the constant satisfying the inequality   $Diam(M^n)\leq D$  and  $k\leq 0$   is the constant satisfying the inequality $Ric(M^3)\geq (n-1)k$.

	\begin{remark}\label{Croke}
				C.B. Croke   in \cite{Croke}   gave an estimate for the diameter of a closed  Riemannian manifold,  which we  adapt to 3-dimensional case:
		$$
		Diam (M^3)\leq  \frac{27\pi Vol(M^3)}{8\ inj(M^3)^{2}}.
		$$  
		In particular, if $M^3$ satisfies the conditions of  Theorem \ref{main} we can take  $$D=\frac{27}{8}\pi\frac{ V_0}{ i_0^{2}}.$$
		Moreover, we can put  $k=k_0$ (see Remark \ref{1.4}) and thus 
		in our case we have 
		
		\begin{equation}\label{V}
		\Lambda_3(k,D) = \Lambda(V_0, i_0,k_0).
		\end{equation}
	\end{remark}

	The following Stern's theorem estimates an average Euler characteristic of a surface dual to the harmonic mapping of $M^3$  into the circle.
	
	\begin{theorem}\cite{St}\label{St}
		Let $u:M^3\to S^1$ be  a harmonic map to the unit length circle  representing  the nontrivial  class $[u] \in [M^3,S^1]\cong H^1(M^3 ;\Z)\overset{PD}\cong H_2(M^3;\Z)$. Then 
		\begin{equation}\label{St0}
			2\pi \int_{\theta\in S^1}\chi(\Sigma_{\theta})\geq \frac{1}{2} \int_{\theta\in S^1}\int_{\Sigma_{\theta}}(|du|^{-2}|Hess(u)|^2 + R_{M^3}),
		\end{equation}
		where $\Sigma_{\theta}=u^{-1}{\theta}$,  $\theta \in S^1$, and  $R_{M^3}$ is the scalar curvature of $M^3$.
	\end{theorem}
	
	\begin{remark}\label{comp} 
		For  a regular  value $\theta \in S^1$ of $u:M^3 \to S^1$ each  connected component $\Sigma^i_{\theta}$ of $\Sigma_{\theta}$  represent a non-trivial homology class in $H_2(M^3)$  (see \cite{St}), and  since $M^3$ is assumed to be irreducible,  $\chi(\Sigma^i_{\theta})\leq0$. 
	\end{remark}
	
	As a corollary of \eqref{St0},  Remark \ref{comp} and  Cauchy–Schwarz  inequality we have  the  following useful estimate.
	
	\begin{theorem}\cite{St}
		\begin{equation} \label{St1}
			\int_{\theta\in S^1}\chi(\Sigma_{\theta}) \geq - \frac{1}{4\pi}||\alpha||_{L^2}||R^-||_{L^2},
		\end{equation}
		where $R^-:=\min\{0,R\}$ is a non-positive  part of the scalar curvature $R$ and $\alpha = u^*d\theta$.
	\end{theorem}

	\subsection{Novikov's theorem and  vanishing cycle.} \label{2.3}

	Let $(M^3,\F)$ be a foliated closed  3- Manifold. An integral  loop   $\alpha:S^1\to M^3$ is a {\it vanishing cycle} if there exists a homotopy $A: S^1\times I \to M^3$ through integral loops $A_t:=A|_{S^1\times t}$ for $\F$ such that $A_0=\alpha$ and $A_t$ is inessential for $0<t\leq 1$. 
	A vanishing cycle $\alpha $ is {\it non-trivial} if $\alpha$ is essential.
	
	The following well-known Novikov's theorem gives topological obstruction to the existence of taut foliations.

	\begin{theorem}  \cite {Nov}\label{Nov}. 
		\begin{enumerate}
			\item For a closed, orientable smooth 3-Manifold $M^3$ and a transversely orientable  $C^2$-smooth foliation $\F$ of codimension one on   $M^3$, the following are equivalent.
			\begin{enumerate}
				\item  The foliation $\F$  has a Reeb component.
				\item There is a leaf $L$ of $\F$  that is not $\pi_1$-injective. That is, the inclusion $i: L \to M^3$ induces a homomorphism $i_*:\pi_1({ L})\to \pi_1(M^3)$  with nontrivial kernel. 
				\item Some leaf of $\F$  contains a non-trivial vanishing cycle.
			\end{enumerate}
			\item{The  support of the non-trivial vanishing cycle is a torus  bounding a Reeb component.}
		\end{enumerate}
	\end{theorem}

	The construction underlying the proof of Novikov's theorem is as follows. Let a simple closed integral regular curve $\alpha: S^1 \to   M^3$ belongs  to the leaf ${ L}\in \mathcal F$ and represents the non-trival element of $Ker (i_*: \pi_1({ L})\to \pi_1(M^3))$.  We can  find  an immersion $p: D \to M^3$  of the  two-dimensional disk $D$ such that $p (\partial D) = \alpha$. 
	This immersion can be brought to a general position by a small perturbation (modulo $\partial D$). It means that the induced foliation $\F':=p^{-1}(\F\cap p(D))$ has only Morse singularities (saddles and centers).  Moreover, by a small perturbation  we  can obtain  not more than one singular point on a single leaf (see \cite[Lemma 9.2.1.]{CC2}).  The resulting foliation outside the singular points on $D$ can be oriented (see subsection \ref{3.1.}). Therefore there is a smooth vector field $X$ tangent to $\mathcal F'$ with zeros  corresponding to the singular points of $\mathcal F'$. 
	Recall that a separatrix coming out  of a singular point and returning  to it, together with the singular point (a saddle) is called  {\it a  separatrix loop}. By the construction,  a saddle singular point of $\F'$ belongs to at most two separatrix loops.  
	
	The general position idea described above can be extended  to arbitrary immersed  compact surfaces. In particular, the following theorem holds.
	
	\begin{theorem}  \cite [{Theorem 7.1.10}]{CC1},\cite[9.2.A]{CC2}  \label{CC1} Let $M^3$ be an oriented  closed 3-manifold with a smooth  transversally oriented foliation $\F$ on it.  Then for any $C^q$-mapping $f:N^2\to M^3$ of a compact oriented  surface $N^2$ such that   in the case  of $\partial N^2\not =\emptyset$ we have  $f|_{\partial N^2}$  either is transverse to $\F$  or has image in a leaf $L$ of $\F$, and  for any $\delta> 0$ there exists a $\delta$-close to $f$ $C^q$ -immersion $p: N^2\to M^3$ in $C^q(N^2,M^3)$ -topology, $q\geq 2$, such that:
		
		\begin{enumerate}
			\item [I.]The induced foliation $\F':=p^{-1}(\F\cap p(N^2))$ has only Morse singularities;
			\item [II.]There is at most one singular  point on  one  leaf;
			\item [III.] In the case of $\partial N^2\not =\emptyset$ we have  $p|_{\partial N^2}$  either is transverse to $\F$  or has image in a leaf $L$ of $\F$.
			
		\end{enumerate}
		
	\end{theorem}
	
	An  immersion $p$ satisfying the properties $I, \ II, \ III$ of  Theorem \ref{CC1}  will be  referred to as  an immersion of general position. 
	
	\begin{definition}
		Let us  identify   closed orbits and separatrix loops  of $\F'$  with the  images of  corresponding  loops $f:S^1\to N^2$   which bypass them once along the trajectories of the vector field  $X$.    The loop  $f:S^1\to N^2$  are refereed to as {\it essential} if  the integral loop $p\circ f$ is  an essential and  {\it  inessential} otherwise.  Note, that due to Reeb stability theorem   inessential closed orbits have a "good neighborhood", i.e.  neighborhood consisting of inessential closed orbits.
	\end{definition}
	
	\begin{definition}\label{VC}
		Let $p:N^2\to M^3$ be an immersion of general position described above. Let us denote by  $\p$ a subset of $ N^2$, which is   topologically a disk with a boundary that is  either a  closed orbit  or a separatrix loop of $\F'$, or it is a pinched annulus (see Fig.\ref{ris2})  consisting of two separatrix loops with a common saddle point.  Suppose  that $ \partial \p$ has a "good collar"  in $\p$, i.e. a collar consisting of inessential closed orbits of $\F'$.   Clearly, $p$-image of  $ \partial \p $  will  represent  a  vanishing cycle. We will refer to $ \cal O:= \partial \p $   as a vanishing cycle too.   
	\end{definition}
	
	One of S.P. Novikov's key observations in \cite{Nov} was the proof of the existence   of  a non-trivial vanishing cycle  $ \cal O $ inside of $(D,\F')$ (see above).  
	
	\begin{figure}[h]
		\center{\includegraphics[scale=0.2]{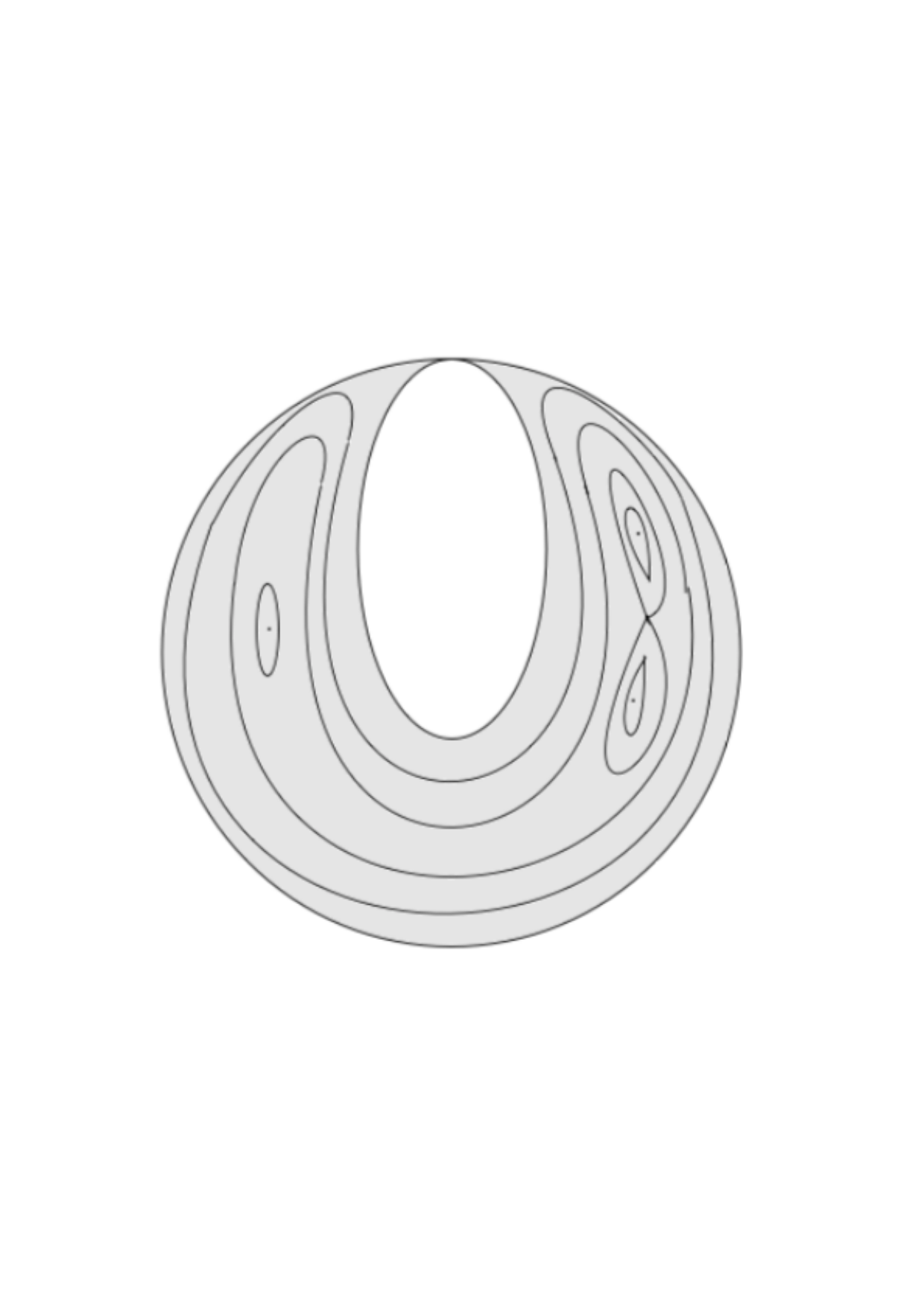}}
		\caption{Pinched annulus $\p$.}
		\label{ris2}
	\end{figure}
	
	\subsection {Euler class of  foliations.}\label{3.1.}
Here we describe  Thurston's construction for calculating the Euler class $e(\F)$ of a transversally oriented codimension one foliation $\F$ on a  closed oriented 3-Manifold $M^3$ (\cite{Th}).   
Let $p: N^2\to (M^3,\F)$ be an immersion of general position of a closed oriented surface $N^2$.  The  induced  foliation  $\F' =p^{-1}(\F\cap p(N^2) )$ on $N^2$ can be oriented outside the singular points. To verify this let us take a  normal vector field $n$ to the foliation $\F$ and    for all $x=p(z)\in p(N^2)$ consider  the orthogonal projection $n'(x)$ of the  normal   $n(x)$  to $\F$ onto the tangent plane $p_*(T_z (N^2))$, which in the case where $z$ is not a singular point uniquely determines the unit tangent vector $e'$ to the leaf $\mathcal L'_z\in \F', \ z\in \mathcal L'_z$, such that the frame $\{e',p_*^{-1}\frac {n'}{| n'|}\}$ defines a positive orientation of $T_z(N^2)$. Now we can define  a smooth vector field $X$ on $N^2$ tangent to $\F'$  whose zeros correspond to the singular points of $\F'$  putting
\begin{equation}\label{X}
	X=|n'|e'.
\end{equation}
\begin{remark}\label{perp}
	It is easy to define a vector field $X^{\perp}$ orthogonal to $\F'$ with respect to the induced  Riemannian metric on $N^2$. The vector field $X^{\perp}$  has the same singular points as $X$ and  the integral curves of $X^{\perp}$  define a foliation $\F'^{\perp}$ orthogonal to $\F'$ on $N^2$.
\end{remark}

The pair  $(N^2, p)$ can be understood as a singular cycle  if we fix some triangulation on $N^2$.   Let the singular homology class $[N^2,p]:=p_*[N^2]\in H_2(M^3;\Z)_{\R}\subset  H_2(M^3;\R)$ corresponds to the singular cycle  $(N^2, p)$, where $[N^2]$ denotes a fundamental class of $N^2$.  As W. Thurston showed \cite{Th}, to calculate the value of the Euler class $e(T\F)\in H^2(M^3,\Z)_{\R}$ of the foliation $\F$ on the singular homology class  $[N^2,p]\in H_2(M^3;\Z)_{\R}$, it suffices to calculate the total index of singular points of the vector field $X$ on $N^2$ taking into account the orientation of $p_*(T_q(N^2))$ at singular points \footnote{We apply Thurston's results to immersed submanifolds rather than embedded ones, where the same ideas work automatically.}. Since $M^3$ is oriented  we can uniquely choose  a unit normal vector $m\in T_{p(q)} M^3$  to the plane $ p_*(T_q(N^2), \ q\in N^2 $, which    defines the orientation of  $ p_*(T_q(N^2))$ coming from the orientation of $T_q(N^2)$.

We say that a singular point $q \in N^2$ is of {\it negative} type if $m(p(q))=-n(p(q))$. In the case when  $m(p(q))=n(p(q))$  the type of the singular point is called {\it positive}.

We denote by $I_N$ the sum of indices of  negative type singular points of the vector field $X$  and by $I_P$ the sum  of indices of  positive type singular points.  
The value of the Euler class $e(T\F)$ on the singular homology class $[N^2,p]$ is calculated as follows:
\begin{equation}\label{index}
	e(T\F)([N^2,p]) = e (p^*(T\F))([N^2]) =I_P -I_N.
\end{equation}

Recall that the Poincaré-Hopf theorem states that 

\begin{equation}\label{PH}
	\chi(N^2) =  I_P + I_N.
\end{equation}

	\section{Preliminary results}
	\subsection{ An upper bound for the number of Reeb components of a bounded mean curvature foliation} 
	
	The results of this subsections are represented in the article \cite{B2}.  For the sake of completeness, we present them in a slightly  more general form.

	Let us prove the following theorem.

	\begin{theorem}\label{rc}
		Let $ M^3 $ be a closed oriented three-dimensional Riemannian manifold satisfying the conditions $(1)-(3)$ of Theorem \ref{main}. 
		Let $\F$ be a codimension one transversally oriented foliation on $M^3$, whose leaves have a modulus of mean curvature bounded above by the fixed constant $H_0$.   
		
		Then \begin{equation}\label{C_0}			
			sys(\F)		\geq  C_0:= \left\{\begin{array}{lr}
				2 \min \{{i_0}, \frac{1}{ \sqrt{K_0}} \arccot\frac{H_0}{\sqrt{K_0}}\}  , &\text {if } K_0>0 \\
				2 \min \{i_0, \frac{1}{H_0}\}, &\text {if } K_0=0 \\
				2 \min \{{i_0}, \frac{1}{ \sqrt{-K_0}} \arccoth\frac{H_0}{\sqrt{-K_0}}\}  , &\text {if } K_0<0 \ \& \  H_0 >\sqrt{-K_0} \\
				2  {i_0}, &\text {if } K_0<0 \ \& \  H_0 \leq \sqrt{-K_0}

			\end{array}  \right\}. 
		\end{equation}
	\end{theorem}
	\begin{proof}
		\
		
		{\it Case 1.} $\frac{sys(\F)}{2} \geq i_0$. 	The result follows immediately.
		
		{\it Case 2.}  $\frac{sys(\F)}{2} < i_0$.

		Let $l_{sys}$ be an integral   closed geodesic  which is not null-homotopic in its support and whose  length  $sys =sys(\F) < 2i_0 $.  Then there is an immersion 
		$$p:D\to \Int B(r), \ r\in  (\frac{sys}{2}, i_0 )$$
		of a  disk   $D$ which is  in general position with respect to $\F$   and such that   $p(\partial D) =l_{sys}$.  	 As  noted in the section \ref{2.3}  there is a vanishing cycle    which  belongs to  $$T^2\cap p(D)\subset \Int B(r),$$ where  $T^2\in\F $ is a torus bounding   a Reeb component  $R$.

		Let  $r\in  (\frac{sys}{2}, i_0 )$ be a regular value of  the mapping
		\begin{equation}\label{Pr}
			pr_r|_{(\Int B(i_0)) \cap T^2 }: (\Int B(i_0)) \cap T^2 \to \R
		\end{equation}
		such that $pr_r(r,\phi_1, \phi_2) = r$,  where $(r,\phi_1, \phi_2)$ is a normal coordinate system in the ball $B(i_0)$.    
		
		In the case   $S(r)\cap  T^2\not =\emptyset$  	from  \cite[Proposition 2]{B1} follows that the sphere   $S(r)$ will be the a supporting sphere  with respect to the inward normal  at the tangent point $q$ for  some inner leaf of the Reeb component $ R$. 
		
		Note, due to     Sard's theorem   the set of regular values of the mapping \eqref{Pr}  has a full  measure   in the interval $(\frac{sys}{2}, i_0 )$ and the value $r$ can be taken arbitrarily close to $\frac{sys}{2}$.

		In the case of $S(r)\cap  T^2 =\emptyset$  we will achieve the tangency  of the sphere $S(r)$ and $T^2$  by decreasing the radius $r$, and the sphere $S(r)$ will become  supporting  for  the torus   $T^2$.  
		
		From \eqref{ineq1} it follows:
			\[					H_r^0 = \left\{\begin{array}{lr}
				\sqrt{K_0}  \cot  (r\sqrt{K_0}) , &\text {if } K_0>0 \\
				\frac{1}{r}, &\text {if } K_0=0   \\
				\sqrt{-K_0}  \coth  (r\sqrt{-K_0}),  &\text {if } K_0<0
				
			\end{array}  \right\} \leq H_0	\] 
		Observe  that $H_0$ must satisfy $	\sqrt{-K_0}< H_0$ if $K_0<0$.
		
		Thus   we conclude that  $sys(\F) $   must satisfy the inequality:

		\[				sys(\F)\geq   \left\{\begin{array}{lr}
			\ \frac{2}{ \sqrt{K_0}} \arccot\frac{H_0}{\sqrt{K_0}} , &\text {if } K_0>0 \\
			\ \frac{2}{H_0}, &\text {if } K_0=0  \\
			\frac{2}{ \sqrt{-K_0}} \arccoth\frac{H_0}{\sqrt{-K_0}} , &\text {if } K_0<0
		\end{array}\right\}	\]

		Combining Case $1$ and Case $2$ we obtain the result. 
	\end{proof}

	From Theorem \ref{rc} it follows:   
	
	\begin{corollary}\label{number}
		
		The number of Reeb components of the foliation $\F$  does not exceed $ \frac{4 H_0Vol(M^3)} {\sqrt{3}C^2_0}$.
	\end{corollary}
	
	\begin{proof}

		From Theorem \ref{sys1}  and  Corollary \ref{th1} we have
		\begin{equation}\label{ineq2}
			\frac{\sqrt{3}}{2}C_0^2\leq Area (\partial R) \leq 2H_0 Vol(R).
		\end{equation}

		From \eqref{ineq2} it follows
		$
		Vol(R)\geq  \frac{\sqrt{3}C^2_0}{4H_0}.
		$
		Since the interiors of Reeb components do not intersect,   the number of  Reeb components does not exceed $ \frac{4 H_0Vol(M^3)} {\sqrt{3}C^2_0}$.

	\end{proof}

	\subsection{Choosing a regular value of the harmonic mapping $$u:M^3\to S^1$$}

		Let $M^3$ from Theorem \ref{main} and $u:M^3\to S^1$ be  a harmonic map to the unit length circle representing  the nontrivial class $[u] \in [M^3,S^1]\cong H^1(M^3;\Z)$. 
	 Let us denote 
		\begin{equation} 
		{\bf A} = \{\theta \in S^1| -\chi(\Sigma_{\theta})\leq \frac{1}{2\pi}||\alpha||_{L^2}||R^-||_{L^2} \}
	\end{equation}
	where  $\alpha = u^*d\theta$ and $\Sigma_{\theta}=u^{-1}{\theta}$, $\theta \in S^1$ and $R^-$ is a non-positive part of the scalar curvature $R$.
	\begin{lemma}\label {L1}
	$ \mu({\bf A} )>\frac{1}{2}$ , 	where  $\mu$ denotes  standard Lebesgue measure.  
	
\end{lemma}
\begin{proof}
	If we assume that the statement of Lemma \ref{L1} is not true, then  taking into account Remark \ref{comp} we get 	$$ \mu(\theta \in S^1| -\chi(\Sigma_{\theta})> \frac{1}{2\pi}||\alpha||_{L^2}||R^-||_{L^2})\geq\frac{1}{2}$$
	and $$ \int_{\theta\in S^1}-\chi(\Sigma_{\theta}) > \frac{1}{4\pi}||\alpha||_{L^2}||R^-||_{L^2}$$
	that 	 contradicts to \eqref{St1}. 
\end{proof}

It follows from  Corollaries \ref{prop1} and   \ref{th1} that  every torus $T^2_j $ bounding the Reeb component of ${ R}_j\in \F$ contains  a simple closed smooth curve $\gamma_j$ which is non-homologous to zero in ${ R}_j$ and  has a length bounded above by the constant $C = \frac {2H_0Vol(M^3)}{\sqrt{3}C_0}$ .  
For convenience, let us introduce the following notations:
$$
{\bf{\Gamma}}:=\bigsqcup_j \gamma_j;\
{\bf T}:=\bigsqcup_j T^2_j;\
{\bf R}:=\bigsqcup_j {R}_j.
$$

By  Corollary \ref{number}  we obtain  the following upper bound on the length of ${\mathbf\Gamma}$:
\begin{equation}\label{length}
	l({\bf{\Gamma}})\leq C_{{\bf{\Gamma}}}:=C\cdot \frac{4H_0Vol(M^3)} {\sqrt{3}C^2_0}=\frac{8H^2_0{Vol(M^3)^2}} {{3}C^3_0}. 
\end{equation}

 Let us denote 
	\begin{equation}\label{1/2}
		{\bf B}:= \{\theta \in S^1| card(u|_{{\bf{\Gamma}}})^{-1}(\theta) \leq 2 C_{{\bf{\Gamma}}}|| \alpha||_{L^{\infty}}\}.
	\end{equation}
	where $\alpha=u^*d\theta.$
	\begin{lemma}\label{L2}
	$
	\mu ({\bf B})>\frac{1}{2}.
	$
	
\end{lemma}

\begin{proof}
	First note that $ ||\alpha||_{L^{\infty}}$ is equal to the norm  $||du||_{L^{\infty}}= \max_{p\in M^3}|du|_p.$
	Assume that the statement of Lemma \ref{L2} is not true.  Then we have
	\begin{equation}{\label{card}}
		\mu (\theta \in S^1 | card(u|_{{\bf{\Gamma}}})^{-1}(\theta) > 2 C_{{\bf{\Gamma}}}|| \alpha||_{L^{\infty}})\geq\frac{1}{2}.
	\end{equation}

	Since ${\bf{\Gamma}} $ is compact, it follows from  Remark \ref{r2} that the set of regular values $reg(u|_{{\bf{\Gamma}}})$  of the function $u|_{{\bf{\Gamma}}}$ is an open and everywhere dense set in $S^1$\footnote{A value is considered regular if its preimage is empty.}. Recall that nonempty open sets in $S^1$ are either all $S^1$ or a finite or countable disjoint union of open intervals in $S^1$:
	\begin{equation}
		reg(u|_{{\bf{\Gamma}}})=\bigsqcup_{ \omega\in \Omega}J_{\omega},
	\end{equation}
	where $\Omega$ is either a finite or a countable indexing set, and $J_{\omega}$  either is an open  interval in $S^1$  for each $\omega\in \Omega$ or is entire  circle $S^1$. Clearly, in the last case $\Omega =\{\omega \}$.
	
	Since the mapping $u|_{{\bf{\Gamma}}}:{\bf{\Gamma}} \to S^1$ is a covering map on each preimage $(u|_{{\bf{\Gamma}}})^{-1}(J_{\omega})$, then by assumption \eqref{card} there is a subset $\Omega'\subset \Omega$ such that   the cardinality of the covering $(u|_{{\bf{\Gamma}}})^{-1}(J_{\omega})\to J_{\omega}, \ \omega \in \Omega', $ is greater than 	$2 C_{{\bf{\Gamma}}}|| du||_{L^{\infty}}$ and 
	\begin{equation}\label{12}
		\mu(\bigsqcup_{\omega \in \Omega'}J_{\omega})\geq\frac{1}{2}. 
	\end{equation}
	The  additivity of $\mu$ implies:
	
	\begin{equation}\label{l}
		\begin{array}{c}
			l({\bf{\Gamma}}) = \mu({\bf{\Gamma}})\geq \mu(\sum_{\omega\in\Omega'}(u|_{{\bf{\Gamma}}})^{-1}(J_{\omega}))=\sum_{\ \omega\in\Omega'}\mu((u|_{{\bf{\Gamma}}})^{-1}(J_{\omega}) \overset{\eqref{card}}>\\
			\\
			2{C_{{\bf{\Gamma}}}}|| du||_{L^{\infty}} \sum_{\omega\in\Omega'} \frac {1}{|| du||_{L^{\infty}}}\mu(J_{\omega}) = 2{C_{{\bf{\Gamma}}}}  \sum_{\omega\in\Omega'} \mu(J_{\omega})\overset{\eqref{12}}\geq C_ {{\bf{\Gamma}}}.
		\end{array}
	\end{equation}
	which contradicts to \eqref{length} and  proves  Lemma \ref{L2}.

\end{proof}

From the lemmas \ref{L1} and \ref{L2} we immediately obtain the following corollary.

\begin{corollary}\label{lll}
 We can find the value  $\theta_0 \in {\bf A} \cap {\bf B}$  such that $\theta_0$ is regular value for $u, u|_{\bf T}, u|_{\bf {\bf\Gamma}}$.  
\end{corollary}
\begin{proof}
	Since $\mu(S^1) =1$, by  the measure property we have $$\mu({\bf A}\cup {\bf B})=\mu( {\bf A}) + \mu( {\bf B}) - \mu( {\bf A} \cap {\bf B}) \leq 1 ,$$ which implies    $\mu( {\bf A} \cap {\bf B})>0$. 	 The rest follows from Remark \ref{r2}. 
\end{proof}

Let us emphasize the  following properties of $\Sigma_{\theta_0}$: 

\begin{itemize}
	\item  $-\chi(\Sigma_{\theta_0})\leq \frac{1}{2\pi}||\alpha||_{L^2}||R^-||_{L^2}$.
	\item If  $x\in \Sigma_{\theta_0}   \cap {\bf{\Gamma}} $,  then   $   {\bf{\Gamma}} \pitchfork \Sigma_{\theta_0} $ at the point $x$. 
	\item If $\Sigma_{\theta_0}\cap {\bf T }\not =\emptyset$, then  $  \Sigma_{\theta_0} \pitchfork  {\bf T }$.
\end{itemize}

\begin{definition}\label{circ}
	Denote by   ${\mathcal C}=\{C_j\}$  the disjoint finite family  (possibly empty) of circles  such that  $\Sigma_{\theta_0}\cap {\bf T } =\bigsqcup_j C_j$. 	
\end{definition}

\begin{corollary} \label{fincirc}
	The number of those circles of the family ${\mathcal C}$, that represent the nontrivial kernel $\ker( {\bf i}_*: H_1({\bf T };\Z)\to H_1(\bf R;\Z))$ does not exceed $2{C_{{\bf{\Gamma}}}}||\alpha||_{L^{\infty}}$, where ${\bf i}_*$ is a homomorphism  induced by the embedding  ${\bf i}: {\bf T} \hookrightarrow {\bf R}$.
\end{corollary}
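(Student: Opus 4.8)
The plan is to convert the homological condition on the circles of $\mathcal C$ into a count of transverse intersection points of ${\bf \gamma}$ with $M^2$, and then to apply item $(1)$ of Lemma \ref{l2}. First I would fix homology bases on the boundary tori. Since a Reeb component $R_j$ is diffeomorphic to the solid torus $D^2\times S^1$, we have $H_1(R_j)\cong\Z$, and the kernel of ${\bf i}_*$ restricted to the summand $H_1(T^2_j)$ is the primitive infinite cyclic subgroup $K_j$ generated by the meridian $m_j$; choose a basis $m_j,\ell_j$ of $H_1(T^2_j)\cong\Z^2$ with ${\bf i}_*\ell_j$ a generator of $H_1(R_j)$ and intersection pairing $m_j\cdot m_j=\ell_j\cdot\ell_j=0$, $\ell_j\cdot m_j=1$. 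By the construction behind Theorem \ref{rc} (through Proposition \ref{prop1}) the curve $\gamma_j\subset T^2_j$ is non-homologous to zero in $R_j$, so $[\gamma_j]=a_jm_j+b_j\ell_j$ with $b_j\neq 0$. If a circle $C\in\mathcal C$ lies in $T^2_p$ and represents a nontrivial element of $\ker{\bf i}_*$, then $[C]$ is a nonzero element of $K_p$, hence $[C]=c_pm_p$ with $c_p\neq 0$, and therefore the algebraic intersection number on $T^2_p$ is $[\gamma_p]\cdot[C]=b_pc_p\neq 0$. Consequently $\gamma_p$ and $C$ must meet in at least one point.

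Next I would note that these meeting points are genuine transverse intersection points of ${\bf \gamma}$ with $M^2$: since $M^2\pitchfork{\bf T}$ and ${\bf \gamma}\pitchfork M^2$ (both recorded in the text preceding Definition \ref{circ}), the set $\gamma_p\cap C$ is finite, nonempty, and contained in ${\bf \gamma}\cap M^2$. Moreover, two distinct nontrivial-kernel circles $C,C'$ contribute disjoint such sets: if they lie in different tori, the corresponding points sit on different connected components of ${\bf \gamma}$; if they lie in the same torus $T^2_p$, then $C$ and $C'$ are disjoint because $\mathcal C$ is a disjoint family, so their intersections with $\gamma_p$ are disjoint. Hence the number of circles of $\mathcal C$ representing a nontrivial element of $\ker{\bf i}_*$ is at most $card({\bf \gamma}\cap M^2)$. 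Finally, since $M^2=f_i^{-1}(x_i)$ we have ${\bf \gamma}\cap M^2=(f_i|_{{\bf \gamma}})^{-1}(x_i)$, and item $(1)$ of Lemma \ref{l2} gives $card(f_i|_{{\bf \gamma}})^{-1}(x_i)\leq C_{{\bf \gamma}}/L_0$, which is exactly the claimed bound.

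The one delicate point — the ``main obstacle,'' although it is mild — is the intersection-number step on the torus: one must know that $K_j$ is a \emph{primitive} cyclic subgroup (so that $[\gamma_j]\notin K_j$ genuinely forces a nonzero $\ell_j$-coefficient) and that the transversality ${\bf \gamma}\pitchfork M^2$ in $M$ restricts, at a point of $T^2_j$, to transversality of $\gamma_j$ with the circle of $\mathcal C$ through that point inside $T^2_j$, so that the geometric and algebraic intersection numbers on $T^2_j$ may legitimately be compared. Both facts are immediate from the solid-torus model of the Reeb components and from $x_i$ being a regular value of $f_i|_{{\bf \gamma}}$ and of $f_i|_{\bf T}$ as provided by Lemma \ref{l2}.
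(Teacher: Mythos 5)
Your proof is correct and takes essentially the same route as the paper: the paper's proof is a one-line appeal to Lemma \ref{l2} plus the observation that ${\bf \gamma}$ necessarily meets each nontrivial-kernel circle of $\mathcal C$, and your argument simply supplies the details the paper treats as obvious (the solid-torus model of $H_1(R_j)$, the intersection-pairing computation on $T^2_j$ forcing a meeting point, the disjointness of the contributions, and the identification ${\bf \gamma}\cap M^2=(f_i|_{\bf \gamma})^{-1}(x_i)$).
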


\begin{proof}
	The proof follows  immediately  from  the definition of the set $\bf B$ (see \eqref{1/2}) and the fact that   ${\bf{\Gamma}}$ necessarily intersects each of the circles in the  family $\mathcal C $, which represent the non-trivial kernel $\ker ( {\bf i}_*: H_1({\bf T };\Z)\to H_1({\bf R} ;\Z))$.
	
\end{proof}

\begin{proposition}\label{prop2}
	Let  $i:M^2 \hookrightarrow M^3$ be an  embedding such that $i(M^2)=\Sigma_{\theta_0}=u^{-1}(\theta_0), $ where $\ \theta_0 \in S^1$ from  Corollary \ref{lll}.    Then there is an embedding of general position $i': M^2 \hookrightarrow  M^3$ with the image  $\Sigma'_{\theta_0}:= i'(M^2)$ satisfying the following properties:
	\begin{enumerate}
		\item [1)]$\Sigma'_{\theta_0}\simeq M^2$. In particular,  $-\chi(\Sigma'_{\theta_0})\leq \frac{1}{2\pi}||\alpha||_{L^2}||R^-||_{L^2}$;
		\item [2)] If $\Sigma'_{\theta_0}\cap {\bf T }\not =\emptyset $ then $\Sigma'_{\theta_0}\pitchfork {\bf T }$  and  the intersection   $\Sigma'_{\theta_0}\cap {\bf T}$ is a disjoint  union of circles ${\mathcal C'}=\bigsqcup_jC'_j$;
		\item [3)]The number of those circles of the family ${\mathcal C'}$  that   represent the nontrivial kernel $\ker( {\bf i}_*: H_1({\bf T };\Z)\to H_1(\bf R ;\Z))$ does not exceed $2{C_{{\bf{\Gamma}}}}||\alpha||_{L^{\infty}}$, where  $\alpha = u^*d\theta$;
		\item [4)]$[M^2,i'] =[M^2,i]\in H_2(M^3;\Z)$.
		
	\end{enumerate}
	
\end{proposition}
\begin{proof} 		For simplicity, we identify $M^2$ with $i(M^2)$.  Let us consider a tubular neighborhood $W\subset M^3$ of the submanifold $M^2$ such that $W\cap {\bf T}$ consist of disjoint tubular neighborhoods $\{W_j\simeq C_j\times \R\}$ in $\bf T$  of the finite family   of circles ${\mathcal C}=\{C_j\}$   defined in  Definition \ref{circ}.  Since $M^2$ and $M$ are orientable, $W$ is diffeomorphic to the trivial normal bundle $\nu M^2$  over  $M^2$. We can identify $W$ with the direct product $ M^2\times \R$, where  $M^2$ corresponds to the zero section $M^2\simeq M ^2\times 0 \overset{i_W}\hookrightarrow  M^2\times \R \simeq W$.  Let us  identify the pair $(W,\bigsqcup_jW_j)$ with the pair of linear bundles  $(\nu M^2, \nu M^2|_{\sqcup_jC_j})$.  
	
	Let $p:W\to M^2$ be a projection along the fibers of $W$.  Recall that the identity component $Diff_0 ^2 (M^2, M^2)$ of $C^2$ - diffeomorphisms  $Diff ^2 (M^2, M^2)$ is  open  in $C^2(M^2, M^2)$ (see \cite {Hi}) and  it's preimage under the continuous mapping $C^2(M^2, W)\overset {p_*}\to C^2 (M^2, M^2)$,  which is defined by  $p_*(f) = p\circ f$,  is an open neighborhood $V_1$ of the  zero section $i_W:M^2 \to W$  (see \cite{PM}). Clearly, $V_1$ consists  of some family of  embeddings $M^2 \to W$ transversal to the fibers of $W$.   
	
	Since ${\bf T}\cap W$ is a closed subset of $W$,  the subset of $C^2(M^2,W)$ transversal to ${\bf T}\cap W$ is  open in  $C^2(M^2,W)$ - topology (see \cite {PM}).  Denote it by $V_2$.   
	Let $i'_W:M^2 \to W$  satisfy the condition $I$ and $II$ of  Theorem \ref{CC1} and $i_W'\in V_1\cap V_2$.  	   	    
	Let us put $i':=i^W\circ i_W'$, where $i^W: W\hookrightarrow M^3$ is a natural embedding.   Denote by $\Sigma'_{\theta_0}$ the image $i'(M^2)\subset M^3$.  	From the properties of $V_1$ and $V_2$ it follows  that   each fiber of $W$  transversely intersects   the embedded submanifold $\Sigma'_{\theta_0}$ exactly at one point and the  parts $1$ and $2$  immediately follow. 
	Since the fibers   of the  bundle  $W_j$ are the fibers   of $W$,  then  $\Sigma'_{\theta_0} \pitchfork W_j$ and  $\Sigma'_{\theta_0}\cap W_j$ is a circle $C'_j$  transversal to the fibers of  $W_j$ for each $j$ and therefore $C'_j$ is homotopic to $C_j$ in $W_j$.   If the circles $C_j$ and $C'_j$ are equipped by   the corresponding orientation, then   $[C_j] = [C_j']\in H_1(\bf T;\Z)$.   Now the statement of part $3$ immediately   follows from Corollary \ref{fincirc}.
	Since an arbitrary diffeomorphism belonging to $Diff ^2_0(M^2,M^2)$ induces the identity isomorphism of $H_2(M^2;\Z)$ and the embeddings $i$ and $i'$, up to such a diffeomorphism  differ in deformation along the fibers $W$, part $4$ is proved.

\end{proof}

\subsection { Surgeries }\label {3.3}

Let $i': M^2\hookrightarrow M^3$  be  a  general position  embedding    from Proposition \ref{prop2} and  $l_1\in M^2$ be an inessential closed  orbit of $\F'=i'^{-1}(\F\cap i'(M^2))$ such that $0\not =[l_1]\in \pi_1(M^2,y_1), \ y_1\in l_1$.  Since $l_1$ is inessential, due to Jordan-Sch\"onflies theorem,   $i'(l_1)$  bounds a disk  in it's support  $ L\in\F$.  Moreover, due to  Reeb's stability theorem
there is a {\it good neighborhood}  $V_{l_1}\simeq l_1\times (-\e,\e)$ in $ M^2$, i.e., a  neighborhood   fibered by the  inessential  closed orbits $ l_1\times t, \ t\in (-\e,\e)$. 

Let us choose a nonzero value ${\e_1}\in (0,\e)$ and produce a surgery on $M^2$ cutting 
out $V_{1} \simeq  l_1\times (-{\e_1},{\e_1})\subset l\times (-\e,\e)\simeq V_{l_1}$ and gluing in the disks ${\mathcal D}_{1}\bigsqcup {\mathcal D}_{-1}$.   Denote by $M^2_1$ the obtained manifold. 
Then  we find  next  inessential closed  orbit $l_2\subset M^2_1$ (if such  exists)  with the good collar $V_{l_2}\simeq l_2\times (-\e,\e)$ such that $0\not =[l_2]\in \pi_1(M_1^2,y_2), \ y_2\in l_2$.    Choosing a nonzero value $\e_2 \in (0,\e)$ we make a surgery cutting 
out $V_{2} \simeq  l_2\times (-{\e_2},{\e_2})\subset l_2\times (-\e,\e)\simeq V_{l_2}$ and gluing  the disks ${\mathcal D}_{2}\bigsqcup {\mathcal D}_{-2}$ instead.  We obtain the new manifold $M^2_2$.  Then we select the next curve $l_3\subset M^2_2$ with the same properties  and follow the same steps as above up to getting a manifold $M^2_{\rho}$.

Let $\{{\mathcal D}_{\pm i}\}, \ i\in \{1,\dots,\rho\},$   be  a  family of the disjoint disks  surgically pasted instead of the cut out annuli  $V_i\simeq l_i\times (-\e_i, \e_i)\subset l_i\times (-\e,\e)$, where $l_i\subset M^2_{i-1}$ is an inessential closed orbit such that $0\not =[l_i]\in \pi_1(M_{i-1}^2,y_i), \ y_i\in l_i$.  Denote $l_{\pm i} = \partial {\mathcal D}_{\pm i}$. Let us  endow $M^2_{\rho}$ with the structure of an  differentiable oriented manifold, complementing the differentiable structures and corresponding orientations of  disks $\bigsqcup^{\rho}_{i=1}{\mathcal D}_{\pm i}$ and $M^2\setminus \bigsqcup^{\rho}_{i=1}V_i $ with a differentiable structure and  an agreed orientation of a tubular neighborhood of the boundary $\partial (M^2\setminus \bigsqcup^{\rho}_{i=1}V_i) $ (see \cite{Hi}).

Let us extend  $i'|_{M_{\rho}^2\setminus \Int {\bigsqcup^{\rho}_{i=1}{\mathcal D}}_{\pm i}}=i'|_{M^2\setminus \bigsqcup^{\rho}_{i=1} V_i}$ to all of $M_{\rho}^2$ by  embeddings  $h_{\pm i}: {\mathcal D}_{\pm i}\to M^3$, such that $h_{\pm i}({ {\mathcal D}}_{\pm i}) = D_{\pm i}$, where $D_{\pm i}\subset L_{\pm i}\in \F,$ are  disks in the corresponding leaves of $\F$ such that $i'(l_{\pm i}) =\partial D_{\pm i},  \ i\in \{1,\dots,\rho\}$.  

Let us consider arbitrary small   foliated neighborhoods $U_{\pm i}$ of $D_{\pm i}$.   Applying  an isotopy to $h_{\pm i}$  that is  supported in $\mathcal D_{\pm i}$ and has  a values in  $U_{\pm i}$,   which pushes out $D_{\pm i}$ into the side that $i'(V_i)$ belongs  to,  we can obtain a smooth general position  immersion $i'_{\rho}: M_{\rho}^2 \to M^3$ which is a  continuation of    $i'|_{M_{\rho}^2\setminus \Int {\bigsqcup^{\rho}_{i=1}  {\mathcal D}}_{\pm i}}$  such   that      the induced foliation ${i'_{\rho}}^{-1}(\F\cap i'_{\rho}({\mathcal D}_{\pm i}))$  on each ${ \mathcal D}_{{\pm i}}$    consists  of  inessential closed orbits   surrounding a center   and  the immersion  $i'_{\rho}$  is  still  transversal to $\bf T$.


\begin{lemma}\label{LL}
	$[M^2_{\rho},i'_{\rho}] =[M^2,i'] \in H_2(M^3;\Z)$.
\end{lemma}
\begin{proof}
	The singular cycles  $(M^2,i')$ and $(M_{\rho}^2,i'_{\rho})$ differ by  the sum of spherical cycles   $\oplus^{\rho}_{i=1}  (S_i^2, g_i)$  where   $S_i^2$  is identified with an annulus $A_i \cong \bar V_i$  to which  two disks $ \mathcal D_{\pm i}$ are glued by identifying  the  boundaries.  Put   $g_i|_{A_i} = i'$ and $g_i|_{ \mathcal D_{\pm i}} =i'_{\rho}$.  From irreducibility  of $M^3$ it follows that $g_i$ can be extended to a mapping  of the ball $G_i: D_i^3\to M^3$.   Taking into account the orientation  coming from  $M^2$ and $M_{\rho}^2$, on the level of singular chains we have  $\partial (\oplus^{\rho}_{i=1}(D_i^3, G_i)) = \oplus^{\rho}_{i=1}(S_i^2, g_i)$, which  implies  the result. 
\end{proof}

\begin{definition}\label{N2}
	Denote by $(N^2,p)$ the  the singular cycle  $(\hat M^2_{\rho},i'_{\rho})$, where $\rho$ is maximal number of surgeries described above and $\hat M^2_{\rho}\subset M^2_{\rho}$ consists of connected components of $M^2_{\rho}$ representing nontrivial homology.    
	As usual, let $\F'$  denote the induced foliation  $p^{-1}(\F\cap p(N^2))$.  
\end{definition}

\begin{remark}\label{constr}
	By Lemma \ref{LL} we have 	$[N^2 ,p] =[M^2,i'] \in H_2(M^3;\Z)$. By the construction,  taking into account  the Jordan-Sch\"onflies theorem and irreducibility of $M^3$, each inessential closed orbit of $\F'$ must bound a disk in $N^2 $ and $N^2$ does not contain spherical components.  
\end{remark}

Let everywhere below $N^2,\F'$  and $p$ satisfy Definition \ref{N2}.

\subsection{Maximal vanishing cycles}

Let $\mathcal O =\partial \p \subset N^2$ be a vanishing cycle (see Definition  \ref{VC}).  	Note that $\p$ is  uniquely defined by $\mathcal O$  because  an ambiguity can only arise when $\mathcal O$ is a closed orbit of $\F'$ and the connected component of $N^2$ containing  $\mathcal O$ is a sphere which is impossible.  In this case, we will understand by $\p(\mathcal O)$  the set  $\p$  from Definition \ref{VC}  bounded by the vanishing cycle $\mathcal O$. 

Let us  introduce the notion of {\it maximal vanishing cycle}.

\begin{definition}\label{omax}
	A vanishing cycle   ${\mathcal O}_{\max}\subset N^2$ is called {\it maximal}  	 if  
	$$\p({\mathcal O}_{\max})\subset \p(\mathcal O) \  \text{implies} \ {\mathcal O}_{\max} = \mathcal O.$$ 
\end{definition}

From definition \ref {omax} immediately follows: 

\begin{lemma}
	  ${\mathcal O}_{\max}$ is either essential closed orbit of $\F'$ whose $p$-image is a nontrivial vanishing cycle,  or it is a singular, i.e. consisting  of separatrix loops.
	\end{lemma}
\begin{proof}
	  Indeed, otherwise  due to Reeb's stability theorem  ${\mathcal O}_{\max}$ is an inessential closed orbit having a good collar consisting of inessential closed orbits containing a vanishing cycle  $\mathcal O=\partial \p(\mathcal O)$ different from ${\mathcal O}_{\max}$ such that    $\p({\mathcal O}_{\max})\subset \p(\mathcal O)$, which is impossible. 
	\end{proof}

\begin{remark}\label{Max}
	
	If $\mathcal O_{\max}$  is  an essential, then
	by  Theorem  \ref{Nov},  $p({\mathcal O})\in {T^2}$, where $T^2$ is the boundary torus of a Reeb component $ R$ and $p_*[{\mathcal O}_{\max}] \in \ker( i_*: \pi_1(T^2)\to \pi_1({R}))$\footnote{ By the class $[{\mathcal O}] $ we mean the  class of the loop $f:S^1\to N^2$   which bypass ${\mathcal O} $ once along the trajectories of the vector field  tangent to $\F'$. }.  Since  the immersion  $p$ is  transverse to $\bf T$ by the construction,  then $\mathcal O_{\max}$ must be a regular vanishing cycle, i.e. a  closed orbit of $\F'$.  Therefore, in the case  where   $\mathcal O_{\max}$ is a singular,  it must be  inessential.   In particular, if $\mathcal O_{\max}$  consists of two separatrix loops  ${\mathcal O}_1$ and $ {\mathcal O}_2$, i.e. $\p({\mathcal O}_{\max})$ is a pinched annulus,  then  ${\mathcal O}_{\max}$ can be of two types:
	\begin{enumerate} \label{inessent}
		\item [A)]   Both ${\mathcal O}_1$ and ${\mathcal O}_2 $ are inessential.  
		\item [B)]  Both  ${\mathcal O}_1$ and ${\mathcal O}_2$ are essential and $p_*[{\mathcal O}_1] = - p_*[{\mathcal O}_2] \in \pi_1 (\mathcal L)$, where $\mathcal L\in \F$ is a a support of  $p({\mathcal O}_{\max})$.   Using  Jordan-Sch\"onflies theorem one can see  that   $p({\mathcal O}_{\max})$ must bound   a pinched annulus in $\mathcal L $.
	\end{enumerate}
	
\end{remark}
\
\

\begin{lemma}\label{ll1}
	Let  ${B} \subset N^2$  be  a disk of $N^2$  bounded by an  inessential closed orbit  of $\F'$.  Then
	$B \subset  \p(\mathcal O_{\max})$ for some maximal  vanishing cycle $\mathcal O_{\max}$.

\end{lemma}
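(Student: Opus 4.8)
The plan is to enlarge $B$ as far as possible inside $N^2$ among disks cut off by inessential closed orbits of $\F'$, and then to read the desired vanishing cycle off the boundary of the result. Concretely, I would introduce the family $\mathcal D$ of all embedded closed disks $D\subset N^2$ with $B\subseteq D$ whose boundary $\partial D$ is an inessential closed orbit of $\F'$; since $B\in\mathcal D$ this family is nonempty. Any two members of $\mathcal D$ are nested, because each of them, bounding a disk in the non-spherical surface $N^2$, bounds a unique one, and as both contain $B$ they cannot be disjoint. Hence $\mathcal D$ is a chain and $B^{*}:=\bigcup_{D\in\mathcal D}D$ is an increasing union of disks, so it is an open disk containing $B$: it has no largest element, for if $D$ were largest then $\partial D$, being an inessential closed orbit and hence null-homotopic in its leaf of $\F$, would have trivial $\F$-holonomy, so — the leaf being transverse to $p(N^2)$ along the regular leaf $p(\partial D)$ — $\F'$ would carry a collar of closed orbits on the outer side of $D$, producing a strictly larger member of $\mathcal D$. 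Since $N^2$ has no spherical component (Definition \ref{N2}) we have $B^{*}\ne N^2$, so the frontier $\Gamma:=\overline{B^{*}}\setminus B^{*}$ is a nonempty compact connected set; being the Hausdorff limit of the boundary orbits $\partial D$, it is invariant under the flow of the field $X$ tangent to $\F'$ and meets only finitely many singularities of $\F'$.

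The core step is the identification of $\Gamma$. Since $B^{*}$ is a topological plane, a Poincar\'e--Bendixson analysis of $X$ inside $B^{*}$ near $\Gamma$, combined with the general position properties of $\F'$ from Theorem \ref{CC1} (only finitely many Morse singularities, at most one saddle on a leaf), should yield two facts at once: that $\Gamma$ is either a single closed orbit $l^{*}$, a single separatrix loop, or a full graph; and that the boundary orbits $\partial D$ accumulate on $\Gamma$ forming a good collar $V\simeq S^1\times[0,1)$ of $\Gamma$ in $B^{*}$ whose circles $S^1\times t$ ($t>0$) are inessential closed orbits. Next I would rule out that $\Gamma$ is an inessential closed orbit: exactly as for $\partial D$ above, such an $l^{*}$ would carry a collar of inessential closed orbits on its outer side, so $\overline{B^{*}}$ extended by that collar would give disks strictly larger than every member of $\mathcal D$, a contradiction; hence if $\Gamma$ is a closed orbit it is essential. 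And if $\Gamma$ is a graph it must be inessential, because by Novikov's theorem an essential closed orbit or graph has $p$-image on the boundary torus of a Reeb component (cf. Remark \ref{inessent}), which is impossible since $p\pitchfork{\bf T}$.

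It then remains to assemble the conclusion. Set $\mathcal O:=\Gamma$. By the above $\mathcal O$ is either an essential closed orbit or an inessential graph, so it is of the type permitted in Definition \ref{PO}; the collar $V$ is the required good collar of $\mathcal O$ consisting of inessential closed orbits; and $\overline{B^{*}}=\mathcal O\cup B^{*}$ is homeomorphic to a closed disk when $\mathcal O$ is a closed orbit or a single separatrix loop, and to a pinched annulus, pinched at the saddle, when $\mathcal O$ is a full graph — precisely the pictures of Fig. \ref{ris2} (see also Remark \ref{PO1}). Thus $\mathcal O$ is a vanishing cycle with $\p(\mathcal O)=\overline{B^{*}}$, and $B\subseteq B^{*}\subseteq\p(\mathcal O)$, which is the assertion.

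I expect the main obstacle to lie in the second paragraph: one must make the Poincar\'e--Bendixson dichotomy rigorous for the singular (rather than regular) planar flow of $X$ and, above all, show that the boundary orbits of $\mathcal D$ cannot spiral onto a complicated invariant set but must close up onto a closed orbit or a polycycle of at most two separatrix loops, while simultaneously producing the good collar $V$. This is exactly where the finiteness of the singular set and the general position hypotheses of Theorem \ref{CC1} are essential, and where the holonomy of $\partial B$ on its outer side genuinely enters — trivial holonomy allowing an enlargement and the spiralling behaviour being excluded by the compactness of $N^2\setminus B^{*}$.
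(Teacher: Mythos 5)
Your plan is essentially the paper's own proof: both form the maximal nested union of disks bounded by inessential closed orbits (the paper's $U=\bigcup_t B_t$ is your $B^{*}$), use Reeb stability and the no-sphere hypothesis to exclude inessential closed orbits and centers from the frontier, and identify the frontier as the required vanishing cycle with its good collar coming from the $l_t$. The step you honestly flag as the main obstacle --- showing the frontier is a single closed orbit or graph rather than something more complicated --- is exactly where the paper supplies the missing argument, ruling out accumulation on a regular frontier leaf by a transversality argument using $\F'^{\perp}$ and the separating property of the $l_t$, and then proving connectedness via tube neighborhoods of the frontier components.
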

\begin{proof}   Due to Reeb’s stability theorem  each inessential closed  orbit $l_0$ of $\F'$ has  a { good neighborhood}  homeomorphic to $(-\e,\e) \times l_0$, where $l_s = s\times l_0$ is an inessential closed orbit of $\F'$. Let $U=\bigcup _{ t }B_t, \ t\in \mathcal T$, be  the  union of  disks containing $B$, obtained by adding to $B$   annuli consisting of the  union of inessential closed orbits. Let  $l_{t } = \partial  B_t$.  Clearly,  the family of disks $\{B_t\}$ is linearly ordered by the  inclusion:    $t_1< t_2\Leftrightarrow B_{t_1}\subset B_{t_2}$.
	
	Note  that $\partial \bar U$ cannot be a center since $ N^2$ does not contain a connected component homeomorphic to $S^2$.   Observe also that 
	$\partial \bar U$ consists of orbits of $\F'$ which are not   inessential closed orbits  because  such a  closed orbits     have   good neighborhoods  and cannot belong to $\partial \bar U$. 
	
	Note that $\partial \bar U$ is a saturated set, i.e. it  consists of leaves of $\F'$ (see \cite{Sib}). If the closure $\partial \bar U$ contains     a   regular  leaf $r \in \F'$ to which  accumulate  another leaves of $\partial \bar U$,  then there exists a  small transversal $\tau$ to $\F'$ through $r$ which contains   the  interval $J$  connecting  two points  $ a\in l_{t_1}, \ b\in l_{t_2},$   between which  there are points of $\partial \bar U$.  But  it is  impossible because  $\F'^{\perp}$ is not degenerated on  $l_t$ and $l_t$ separates $N^2$, therefore if $\tau $ leaves $B_t$, it will never return to $B_t$.   
	
	We conclude that $\partial \bar U$ consists of at most a finite union ${\bf O}: =\bigsqcup_i \mathcal O_i $ of  essential closed orbits or separatrix loops of $\F'$.   The claim to show  that ${\bf O}$ is  connected and is a vanishing cycle. Denote by  ${\bf U}:=\bigsqcup_i U_i$    a disjoint union  of  tube neighborhoods   $U_i$ of ${ O_i}$. Clearly, $\bar U \setminus \bf U$ is  compact and is contained inside of $B_{t_0}$ for some $t_0\in \mathcal T$. Since $U\setminus B_{t_0}$ is connected  we immediately conclude that ${\bf O}$ is connected.  From the orientibility of $N^2$ it follows that  ${\bf O}$ divides $\bf U$ into  connected components,   closure of each of which   in $N^2$  has  a nonempty boundary consisting  of orbits  of ${\bf O}$.    Since ${ U}\setminus B_{t_0}$ is connected it can only belong  to one of these connected components  and thus, by the definition, ${\bf O}$  is a vanishing cycle. The result follows from the finiteness  of both  the set of  singular points and the  number of essential regular vanishing cycles of $\F'$.
\end{proof}

\begin{lemma}\label{ll2}

	Let  $\p_{\max}=\p({\mathcal O_{\max}})$ and $\p'_{\max}=\p({\mathcal O'_{\max}})$, where $O_{\max}$ and $O'_{\max}$ are  maximal vanishing cycles. Then 	either $\p_{\max}=\p'_{\max}$ or $\p_{\max}\cap \p'_{\max}=\emptyset$. In particular,  $\p_{\max}$ in  Lemma \ref{ll1} is unique. 
	
\end{lemma}
\begin{proof}

	It is enough to  suppose that  ${\mathcal O}_{\max}$ and $ {\mathcal O'}_{\max}$ are different.  Otherwise we obtain a  contradiction since  $N^2$  does not contain $S^2$ as a connected component. One of the following  takes place for  ${\mathcal O}_{\max} \cap {\mathcal O'}_{\max}$:  
	
	\begin{enumerate}
		\item [(i)]$\emptyset$;  
		\item [(ii)] a saddle point;
		\item [(iii)] a separatrix loop.  
	\end{enumerate}
	
	In the cases  $(ii)$ or $(iii)$  at least one of    $\p_{\max}$ or  $\p'_{\max}$   must be  a  disk and  ${\mathcal O}_{\max}\cup {\mathcal O}'_{\max}$ is  two separatrix loops with  a common saddle point $s$. By Remark \ref{inessent},   $\mathcal O$ and $\mathcal O'$  are inessential and therefore due to the Reeb stability theorem there exists an external good collar $V$  of $\p_{\max}\cup \p'_{\max}$ \footnote{Note that $\p_{\max}\cup \p'_{\max}$ is homeomorphic to either a disk or  a bouquet of two disks.}. Let $l\subset V $ be an inessential closed orbit. 
	Clearly, $l$ bounds a disk $B$ containing  $\p_{\max}\cup \p'_{\max}$. Applying Lemma \ref{ll1} we find a vanishing cycle ${\mathcal O}$ such that  $\p_{\max}\cup \p'_{\max}\subset \p({\mathcal O})$ which contradicts  the maximality of both ${\mathcal O}_{\max}$ and ${\mathcal O}'_{\max}$.

	Let us consider   case $(i)$.  
	Let us suppose that there exists $a\in \Int \p \cap \Int \p'$. Since  $\p$ and $\p'$ are connected,   $\p'_{\max}\not \subset \p_{\max}$ and $\p_{\max}\not \subset \p'_{\max}$,  we have  ${\mathcal O}_{\max}\cap \p'_{\max}\not=\emptyset$ and     ${\mathcal O'}_{\max}\cap \p_{\max}\not = \emptyset$.   Taking into account the condition $(i)$ and connectivity of ${\mathcal O}_{\max}$ and  ${\mathcal O'}_{\max}$ we obtain $${\mathcal O}_{\max}\subset \Int \p'_{\max} \  \text{ and }  \  {\mathcal O'}_{\max}\subset \Int \p_{\max}.$$  Let  $l\subset \p_{\max}$ be an inessential closed orbit of a good collar of $\mathcal O_{\max}$, which bounds a disk $B$ inside of $\p_{\max}$ such that   $a\cup {\mathcal O'}_{\max}\subset \Int B$.  Since $\p'_{\max}\not \subset B$,  for reasons similar to the above, we conclude that $l\subset \Int \p'_{\max}$.  By the Jordan-Sch\"onflies theorem $l$ bounds a disk  $B'\subset \Int\p'_{\max}$. On the  other hand,  $l$ bounds $B\subset \p_{\max}$. Since  $ {\mathcal O'}_{\max}\subset \Int B$ we conclude that $B\not = B'$ which  implies that  $B \cup B' \simeq S^2$. But this contradicts to the fact that $N^2$ does not contain  connected components homeomorphic to the sphere.   Thus it follows that   $ \Int \p \cap \Int \p'=\emptyset$ which implies  the result.
	
\end{proof}

\begin{corollary}\label {cmax}
	Each center of $\F'$  belongs to the unique  $\p_{\max}=\p({\mathcal O}_{\max})$.  
\end{corollary}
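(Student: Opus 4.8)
The plan is to derive this directly from Lemmas \ref{ll1} and \ref{ll2}, the only genuine content being a local observation about $\F'$ near an elliptic singular point.

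First I would fix a center $c$ of $\F'$. By condition I of Theorem \ref{CC1}, $c$ has a neighbourhood in $N^2$ on which $\F'$ is a family of concentric circles surrounding $c$, so I can choose a small closed disk $B\subset N^2$ with $c\in\Int B$ whose boundary $l:=\partial B$ is a closed orbit of $\F'$. The key preliminary step is to verify that, after shrinking $B$, the orbit $l$ is \emph{inessential}. Indeed, $p(c)$ lies on some leaf $L\in\F$; taking a short transversal to $\F$ at $p(c)$ gives a submersion $\phi$ defined near $p(c)$ whose level sets are the plaques of $\F$, and by the very definition of a center the function $\phi\circ p$ has a nondegenerate extremum at $c$. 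Hence for $B$ small enough $p(B)$ lies inside a single flow-box of $\F$ and $p(l)$ is contained in one plaque of the leaf $\{\phi=\phi(p(l))\}$; since a plaque is an embedded $2$-disk, $p(l)$ bounds a disk in that leaf, i.e. $l$ is inessential.

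Next I would feed $B$ into Lemma \ref{ll1}: being a disk of $N^2$ bounded by an inessential closed orbit, $B\subset\p(\mathcal O)$ for some vanishing cycle $\mathcal O$, so in particular $c\in\p(\mathcal O)$. By part $(1)$ of Lemma \ref{ll2} there is a maximal vanishing cycle $\mathcal O_{\max}$ with $\p(\mathcal O)\subset\p_{\max}:=\p(\mathcal O_{\max})$, whence $c\in\p_{\max}$. For uniqueness, suppose $c$ also belongs to $\p'_{\max}:=\p(\mathcal O'_{\max})$ for some maximal $\mathcal O'_{\max}$; then $c\in\p_{\max}\cap\p'_{\max}\neq\emptyset$, and part $(2)$ of Lemma \ref{ll2} forces $\p_{\max}=\p'_{\max}$. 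This yields the claimed unique $\p_{\max}$ containing $c$.

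The only delicate point I anticipate is the inessentiality of the small orbits around $c$; it is a routine flow-box computation, but it is the one place where the center normal form of Theorem \ref{CC1} is really used. Once that is in place, the corollary is an immediate combination of Lemmas \ref{ll1} and \ref{ll2}.
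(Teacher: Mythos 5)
Your proof is correct and follows essentially the same route as the paper: observe that a center has a punctured neighborhood consisting of inessential closed orbits, then apply Lemma~\ref{ll1} for existence and Lemma~\ref{ll2} for uniqueness. The paper states the inessentiality in one line as obvious; your flow-box computation simply makes that observation explicit.
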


\begin{proof} A center  $\F'$ has a punctured neighborhood consisting of inessential closed orbits and the result immediately follows from  Lemmas \ref{ll1} and  \ref{ll2}. 
\end{proof}

\begin{lemma}\label {ll4}

	Let $\p_{\max} = \p({\mathcal O_{\max}})\subset N^2$ be a pinched annulus.   Then the   separatrix loops of $\mathcal O_{\max}$  are essential and  their $p$ - images   bound a pinched annulus in the leaf $\mathcal L\in \F$ containing  $p(\mathcal O_{\max})$.

\end{lemma}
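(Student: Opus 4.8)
The plan is to reduce everything to showing that $\mathcal O_{\max}$ is of type B in the terminology of Remark \ref{inessent}; once that is established, both assertions of the lemma drop out.

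First I would pin down the shape of $\mathcal O_{\max}$. Since $\p_{\max}$ is a pinched annulus and not a disk, by Definition \ref{PO} (see Figure \ref{ris2}) the vanishing cycle $\mathcal O_{\max}=\partial\p_{\max}$ can be neither a regular vanishing cycle nor a single separatrix loop — in both of those cases $\p(\mathcal O_{\max})$ would be a disk. Hence $\mathcal O_{\max}$ is a full graph consisting of two separatrix loops $\mathcal O_1$ and $\mathcal O_2$ meeting at a hyperbolic singular point $s$ of $\F'$, the pinch point of $\p_{\max}$. As $p$ is transverse to ${\bf T}$, Remark \ref{inessent} shows that $\mathcal O_{\max}$ is of type A (both $\mathcal O_1,\mathcal O_2$ inessential) or of type B (both essential and $[p(\mathcal O_1)]=-[p(\mathcal O_2)]\in\pi_1(\mathcal L)$).

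The main work is to exclude type A. Assume both $\mathcal O_1$ and $\mathcal O_2$ are inessential, so $p(\mathcal O_i)$ is null-homotopic in the leaf $\mathcal L$ for $i=1,2$. Then the holonomy of $\F$ along $p(\mathcal O_i)$ is trivial, hence so is the holonomy of the induced foliation $\F'$ along the regular part of $\mathcal O_i$ in $N^2$; consequently the leaves of $\F'$ in a collar of $\mathcal O_{\max}$ on the side opposite to $\p_{\max}$ are again closed orbits. Tracing the four separatrices of $\F'$ at the saddle $s$, these leaves close up into inessential closed orbits of $\F'$ that encircle $\mathcal O_{\max}$ from the outside (their $p$-images are freely homotopic to a product of the null-homotopic loops $p(\mathcal O_i)$, hence null-homotopic). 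Now I would run the ``growing disk'' argument from the proof of Lemma \ref{ll1}, starting from a disk bounded by an inessential closed orbit in a good collar of $\mathcal O_{\max}$ contained in $\p_{\max}$: because $\mathcal O_{\max}$ is flanked by inessential closed orbits on \emph{both} sides, the increasing union of disks does not stop at $\mathcal O_{\max}$ but crosses it, and by finiteness of the singular set and of the regular vanishing cycles of $\F'$ it stabilizes at a vanishing cycle $\mathcal O$ with $\p_{\max}\subsetneq\p(\mathcal O)$. Since then $\p(\mathcal O_{\max})\subsetneq\p(\mathcal O)$ and $\mathcal O\neq\mathcal O_{\max}$, this contradicts the maximality of $\mathcal O_{\max}$. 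The delicate point, which I expect to be the main obstacle, is the local analysis at the saddle $s$: one must check that the outer leaves genuinely close up — they cannot spiral onto ${\bf T}$ because $p$ is transverse to ${\bf T}$ — and that the region swept out past $\mathcal O_{\max}$ is still bounded by a single vanishing cycle, so that Lemma \ref{ll1} indeed yields a strictly larger $\p(\mathcal O)$.

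Therefore $\mathcal O_{\max}$ is of type B, which is precisely the assertion that its separatrix loops $\mathcal O_1,\mathcal O_2$ are essential. For the remaining assertion nothing more is needed: type B of Remark \ref{inessent} already furnishes a lift of the figure-eight $p(\mathcal O_{\max})=p(\mathcal O_1)\cup p(\mathcal O_2)$ to an embedded circle in the universal cover $\widetilde{\mathcal L}$ of $\mathcal L$; by the Jordan--Sch\"onflies theorem this circle bounds a disk in $\widetilde{\mathcal L}$, and its image under $\widetilde{\mathcal L}\to\mathcal L$ is a pinched annulus in $\mathcal L$ whose boundary is $p(\mathcal O_1)\cup p(\mathcal O_2)$, as required.
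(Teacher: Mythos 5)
Your proposal follows essentially the same route as the paper: reduce via Remark~\ref{inessent} to excluding the case of two inessential separatrix loops, derive a contradiction with the maximality of $\mathcal O_{\max}$ by producing a strictly larger vanishing cycle via Lemmas~\ref{ll1} and~\ref{ll2}, and then read off the pinched annulus in $\mathcal L$ directly from case~B of Remark~\ref{inessent}. The only stylistic difference is in how the larger vanishing cycle is produced: the paper picks an inessential closed orbit in the good \emph{exterior} collar of $\mathcal O_{\max}$, invokes Remark~\ref{constr} so that this orbit bounds a disk in $N^2$, observes that this disk must contain $\mathcal O_{\max}$ (otherwise the ambient component of $N^2$ would be a sphere), and applies Lemma~\ref{ll1} to that disk; you instead run the growing-disk argument from the inside of $\p_{\max}$ and argue it ``crosses'' $\mathcal O_{\max}$ — this works, but only because of exactly the same observation you flag as the delicate point, namely that an exterior inessential closed orbit bounds a disk containing $\p_{\max}$, so starting from the exterior collar (as the paper does) is the cleaner way to organize the argument.
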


\begin{proof}  
	According to  Remark \ref{Max}  it is enough to show that there is no  maximal vanishing cycle ${\mathcal O_{\max}}$  consisting of inessential separatrix loops.  
	
	Suppose that  the  separatrix loops of $\mathcal O_{\max}$ are inessential,  then, due to  Reeb's stability theorem, they have  good exterior  collars with respect to the  pinched annulus $\p_{\max}$. By Remark \ref{constr} each  closed orbit  of this collar  must bound  a disk in $N^2$.   Since there are no connected components of $N^2$  homeomorphic to $ S^2$, one of such disks contains $\mathcal O_{\max}$.   We conclude that   ${{\mathcal O}_{\max} }\subset \Int \p(\mathcal  O)$ for some vanishing cycle $\mathcal O$, which  contradicts the maximality of    ${{\mathcal O}_{\max} }$.

\end{proof}

\section {Proof of  Main Theorem}

\subsection{Reducing the number of singular points. }
\label{red}
Let $\{\p^k_{\max}=\p({\mathcal O}^k_{\max}), \ k\in \bf K\}$ be a family of  disks and pinched annuli in $N^2$ bounded by maximal vanishing cycles of $\F'$, where $\bf K$ denotes a finite (possibly empty) indexing set.  Let $\{V_k \subset \p^k_{\max}, \ k\in \bf K\}$ denote good collars of ${\mathcal O}^k_{\max}$ and $\{l_k\subset V_k, k\in \bf K\}$   are fixed inessential closed orbits of $\F'$ inside of the good collars.  Let us suppose that $V_k$ is small enough to $p|_{V_k}$ be an  embedding.  By    Remark \ref{constr}, Definition \ref{essen} and Jordan-Sch\"onflies theorem, each $l_k$ bounds a disk $B_k$ in $N^2$, and  $p (l_k)$  bounds a disk $D_k\subset L_k\in \F$ in the supporting leaf $L_k\in\F$.   Let us  redefine the mapping $p|_{ B_k}$ by the embedding $h_k:  B_k \to M^3$ such that $h_k|_{l_k}= p|_{l_k}$ and  $h_k(B_k) =D_k$.

Let us consider arbitrary small   foliated neighborhoods $U_{k}$ of $D_{k}$.   Applying  an isotopy to $h_{k}$  that is  supported in $B_{k}$ and has  a values in  $U_{k}$,   which pushes out $D_{k}$ to the side away from $p(V_k)\cap U_{k}$,    we can obtain a smooth general position  immersion $p': N^2 \to M^3$  which is a  continuation of    $p|_{N^2\setminus \Int {\bigsqcup_k  {B}}_{k}}$  such   that      the induced foliation ${p'}^{-1}(\F\cap p'({B}_{k}))$  on each ${B}_{{k}}$   consists  of  inessential closed orbits  surrounding a center $c_k$.

\begin{lemma}
	$[N^2,p] = [N^2,p'] \in H_2(M^3;\Z).$
\end{lemma}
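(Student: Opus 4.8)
The plan is to repeat verbatim the argument used for the lemma $[i'_{\rho}]=[i']\in H_2(M)$ in Subsection~\ref{3.3}: I will show that the singular $2$-cycles $(N^2,p)$ and $(N^2,p')$ differ by a sum of spherical cycles, each of which is null-homologous because $M$ is irreducible, so that $\pi_2(M)=0$.

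First I would record that, by construction, $p'$ is a continuation of $p|_{N^2\setminus\Int\bigsqcup_{k\in{\bf K}}\hat B_k}$; hence $p$ and $p'$ coincide on $N^2\setminus\Int\bigsqcup_k\hat B_k$, and in particular on every boundary circle $\partial\hat B_k$. Fixing a triangulation of $N^2$ in which $N^2\setminus\Int\bigsqcup_k\hat B_k$ and each disk $\hat B_k$ are subcomplexes, one obtains on the level of singular chains
$$(N^2,p)-(N^2,p')=\sum_{k\in{\bf K}}\bigl[(\hat B_k,p)-(\hat B_k,p')\bigr].$$
Since $p$ and $p'$ agree on $\partial\hat B_k$, each summand $(\hat B_k,p)-(\hat B_k,p')$ is a cycle: gluing two copies of the disk $\hat B_k$ along their common boundary produces a $2$-sphere $S_k^2$, and declaring $g_k$ to be $p|_{\hat B_k}$ on the first copy and $p'|_{\hat B_k}$ on the second copy (taken with the reversed orientation) gives a continuous map $g_k:S_k^2\to M$ that represents this cycle.

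Finally, since $M$ is irreducible we have $\pi_2(M)=0$, so $g_k$ is null-homotopic and therefore extends to a map $G_k:D_k^3\to M$ of the $3$-ball; on the level of singular chains $\partial(D_k^3,G_k)=(S_k^2,g_k)$, the remaining faces of the cone being degenerate, so $(S_k^2,g_k)$ is a boundary. Summing over the finite index set ${\bf K}$ then yields $[p]-[p']=\sum_{k\in{\bf K}}[S_k^2,g_k]=0$ in $H_2(M)$, which is the assertion of the lemma. The only point needing care is the consistent bookkeeping of orientations on the two copies of $\hat B_k$, so that the difference of the two $2$-chains on $\hat B_k$ is exactly the sphere cycle $(S_k^2,g_k)$ with a well-defined orientation; the rest of the argument is identical to the one already carried out in Subsection~\ref{3.3}.
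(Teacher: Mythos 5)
Your proof is correct and follows essentially the same approach as the paper: for each $k$ you glue two copies of $\hat B_k$ along their common boundary (where $p$ and $p'$ agree) to form a sphere cycle $(S_k^2,g_k)$, invoke irreducibility (hence $\pi_2(M)=0$) to extend $g_k$ over a $3$-ball, and conclude that each such sphere cycle bounds, with the orientation bookkeeping handled just as in the analogous lemma of Subsection~\ref{3.3}.
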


\begin{proof}
	For each $k\in \bf K$  let $S^2_k:= ( B^1_k \bigsqcup  B^2_k)/ (\partial  B^1_k \sim \partial  B^2_k) \simeq S^2$ be  two copies of $ B_k$ with naturally identified boundaries.   
	Let us define a spheroid $g_k: S^2_k\to M^3$, where $g_k|_{ B^1_k}=p|_{ B^1_k}$ and $g_k|_{ B^2_k}=p'_{ B^2_k}$.    Since $M$ is irreducible, $g_k$ can be extended to a mapping of the ball: $\Phi_k :D^3_k\to M^3 $  such that  $S^2_k=\partial D^3_k$.  Taking into account the orientations of $ B^i_k,\ i=1,2,$  coming from  the  orientation of $ B_k$, on the level of singular chains we obtain $\partial (D^3_k,\Phi_k) =(S_k^2, g_k)$. It means that   $(N^2,p) -( N^2,p')=\partial ( \oplus_k (D^3_k,\Phi_k))$   which implies the result.

\end{proof}

\begin{definition}
Let us denote $\F'':= p'^{-1}(\F\cap p'(N^2)).$  
\end{definition}
Let  ${\bf K}'\subset \bf K$ be such that $$\{\p^k_{\max}=\p({\mathcal O}^k_{\max}), \ k\in \bf K'\subset \bf K\}$$ is a family of disks or  pinched annuli, such that  each  ${\mathcal O}^k_{\max}$ is singular with a saddle $s_k$.    Let   $(\p_{\max},{\mathcal O}_{\max}, V, l, L, D, B, U, h,c,s)$  be an  arbitrary element of    
$\{(\p^k_{\max},{\mathcal O}^k_{\max}, V_k, l_k, L_k, D_k, B_k, U_k, h_k, c_k, s_k), \ k\in \bf K'\}.$  
From  Remark \ref{inessent} and Lemma \ref{ll4} it follows that $p'({\mathcal O} _{\max})$ also bounds respectively a disk or  a pinched annulus  in its support $L\in \F$, which we denote by $D_{\max}$.  

Let  us suppose that $D_{\max}$ is a pinched annulus, then  
$D_{\max}\subset A \subset L$ , where $A\simeq S^1\times (0,1)$ be an annular  neighborhood of $D_{\max}$ in the leaf $L$ and $D_{\max}$ is a deformation retract of $A$.  Since  the collar $V$ of ${\mathcal O}_{\max}$  can be taken arbitrarily small,  we can assume that the  normal  relative to   $\F$ collar  $N\simeq  A\times [0,1)$ of $A=A\times 0$ contains $p'(V)$ and the foliation $\F\cap {N}$ is transversal to the interval fibers $\{*\times [0,1)\}$.  The  embedding $$S^1:=S^1\times 1/2\hookrightarrow S^1\times (0,1)\simeq A$$   extends to the embedding  $S^1\times [0,1) \hookrightarrow  A\times [0,1)\simeq N$  transversal to $\F\cap {N}$.  The image of this embedding we also denote by $S^1\times [0,1) $.  Clearly,    the foliation  $\F\cap N$  is obtained from the foliation $ \F \cap (S^1\times [0,1))$  by multiplying  by the  interval $(0,1)$. Since  leaves of   $\F \cap (S^1\times [0,1))$ are homeomorphic to  intervals or circles  representing the generator of $\pi_1(S^1\times [0,1))\cong \Z$,  the foliation $\F\cap {N}$ consists of leaves, that are either    homeomorphic to    annuli, which are deformation retract of $N$ or  contractible. 
It follows that each leaf  $\cal L$ of $\F\cap {N}$ induces a monomorphism of fundamental groups with respect to the embedding ${\cal L}\longrightarrow N$.
Therefore, since the loop $p'(l)$ is free homotopic to the loop $p'({\mathcal O}_{\max})$ inside of $N$, and  the loop $p'({\mathcal O}_{\max})$ is null-homotopic in $A$,  the loop  $p'(l)$ is null-homotopic  in $N$ and therefore, it is null-homotopic   in its support    ${\cal L} \in \F\cap N$\footnote{$L\cap N$ can be disconnected.}. Thus,  by the Jordan-Sch\"onflies theorem $p'(l)$ bounds a disc in $\cal L$.  Since there is no leaves of $\F$ homeomorphic to sphere, this disc should be coincide with the disk $D$. 

In the case  ${ D}_{\max}$ is  a disk,  we denote by   $A$  an open disk in $L$ containing $D_{\max}$.  Then  due to Reeb's stability theorem the induced foliation $\F\cap N$ of the normal collar $N\simeq A\times [0,1)$ containing $p'(V)$ is  homeomorphic to the product foliation $\{A\times *, \ *\in[0,1)\}$, i.e. is a foliation by disks and    by the Jordan-Sch\"onflies theorem $p'(l)$  will also bound the disc $D$ in its support   ${\cal L}\in \F\cap N$.

Since $U$ can be taken by an arbitrarily small neighborhood of $D$ we can assume that $p'(B)\subset N$. 
Let us denote $B_{\max}:=p'(B\cup V)$.


	By the construction,  in the case ${ D}_{\max}$ is a pinched annulus,  ${D}_{\max}\cup { B}_{\max}$  bounds   a ball $Q^3$ with two points identified which we call a pinched ball.  Using the same reasoning as for the disk $D$ we can show that  the  foliation  $\F\cap Q^3= \{D_t, \ t\in [0,1]\}$, is a foliation by disks excepting the cases $t=0$, where $D_0 = D_{\max}$, and $t=1,$ where $D_1 = p'(c)$.  By the diffeomorphism we  can represent $(N,\F\cap N)$ in $\R^3$ in  such a way that the  foliation $\F\cap N$   becomes transverse to the vertical direction and $D_{\max}$ belongs to the horizontal plane (see Fig. \ref{ris1})\footnote {Recall, that $\F$ is transversally oriented.}. 
	
	If ${ D}_{\max}$ is homeomorphic to a disk, then ${D}_{\max}\cup { B}_{\max}$ bounds the ball $B^3$.  By the diffeomorphism we  can represent $(N,\F\cap N)$ in $\R^3$ in  such a way that the   foliation $\F \cap N$ becomes the level set of the height function and is a foliation into disks that degenerate to a point (see Fig. \ref{ris1}).

	\begin{figure}
		{\includegraphics[scale=0.5]{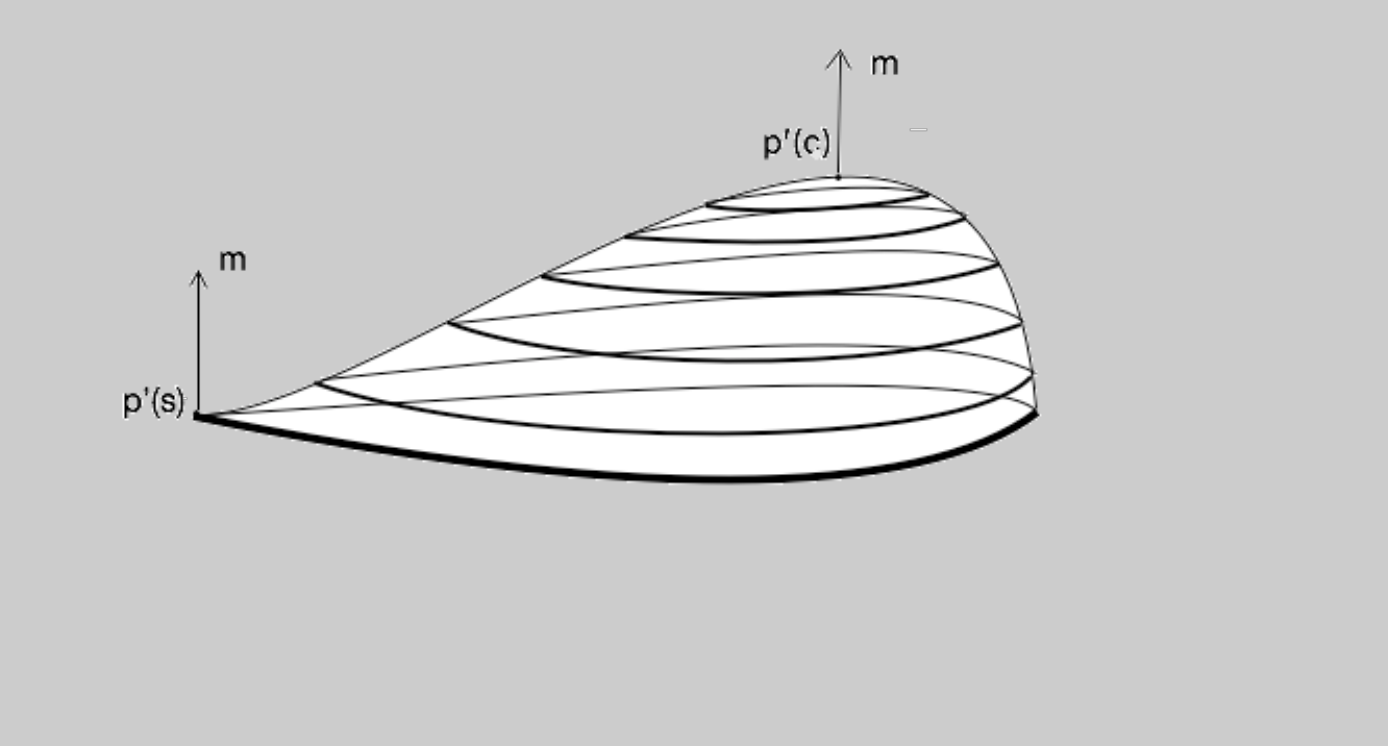}}
		{\includegraphics[scale=0.5]{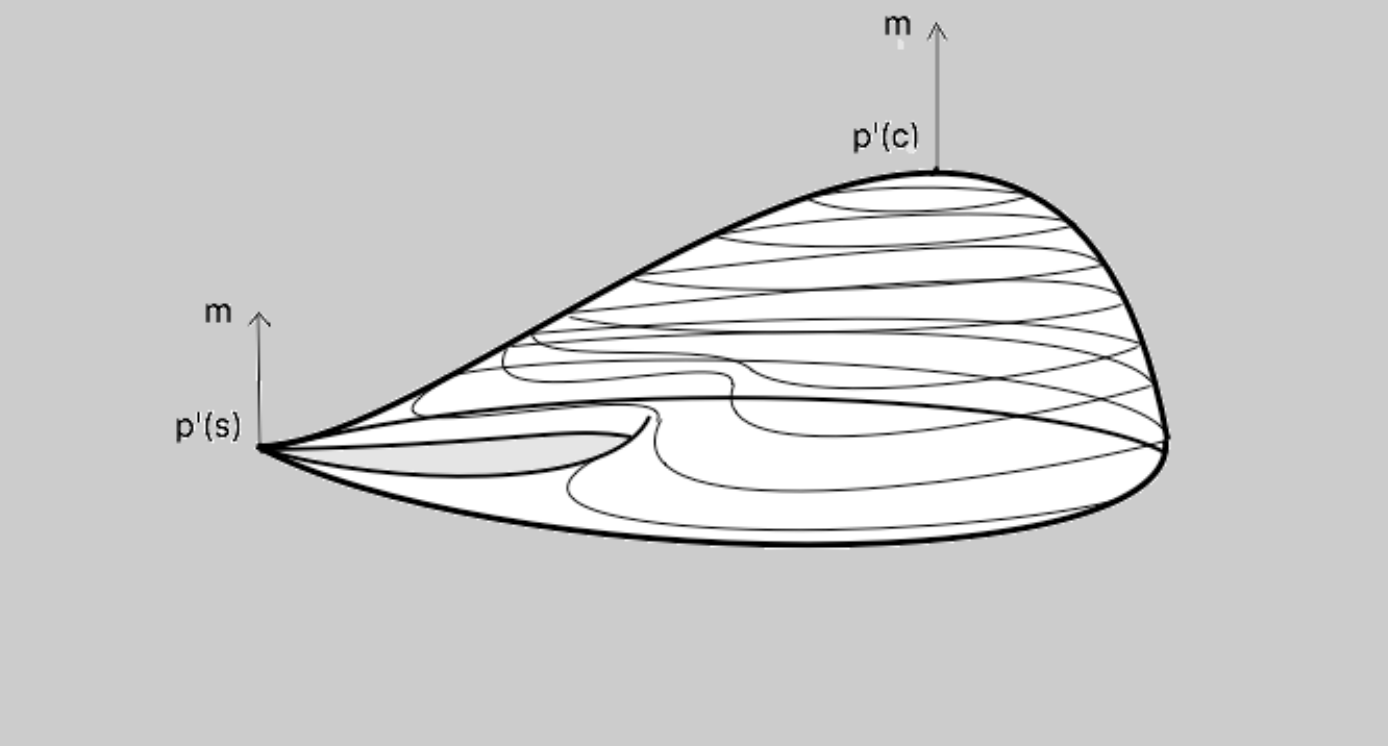}}
		\caption{The foliated ball $B^3$ and pinched ball $Q^3$}
		\label{ris1}
	\end{figure}
	
	Taking into account the form of a  surface in general position  with respect to the foliation  in the neighborhood of  singular points,  in  both cases, we can see that the  directions of the  normal vector field $n$ to the foliation $\F$ and the normal vector field $m$ to the $B_{\max}$ at the singular points $p'(s)$ and $p'(c)$ either  simultaneously coincide  or simultaneously opposite (see Fig. \ref{ris1}).  Thus the  types of  the singular points $s$ and $c$   coincide. 
	Since by Lemma \ref{ll2} the saddle point $s$ belongs to only one $\p_{\max}$, we conclude  that when calculating the Euler class the  pair of singular points $s$ and $c$ can be eliminated  because  their total index in the sum \eqref{index} is equal to  zero.

	\subsection {Estimation of  $L^2$-norm of the Euler class $e(\F)$.}

	Note, that   we  did not generate    new  (i.e.   not coming  from $(M^2, \F')$) essential closed orbits of  $(N^2,\F'')$.    Moreover, the  surgeries described in  subsection \ref{3.3} increase the Euler characteristic.  Taking into account Proposition \ref{prop2},   Remark \ref{Max} and  Corollary \ref{cmax}, we conclude that   the  number of centers of $\F''$ which are not eliminated above (see subsection \ref{red}), i.e.,   centers   corresponding to maximal regular vanishing cycles,   does not exceed $2 C_{{\bf{\Gamma}}}|| \alpha||_{L^{\infty}}$.   Since     
	$$-\chi(N^2)\leq -\chi(M^2)\leq \frac{1}{2\pi}||\alpha||_{L^2}||R^-||_{L^2},$$ 
	using the formulas  \eqref{index} and   \eqref {PH},  considering the singularities eliminated above, we get the following  estimate:
	\begin{equation}\label{euler}
		|e(T\F)([N^2,p'])|  \leq \frac{1}{2\pi}||\alpha||_{L^2}||R^-||_{L^2}  +  4C_{{\bf{\Gamma}}}|| \alpha||_{L^{\infty}} . 
	\end{equation}
		Taking into account \eqref{*-norm}, \eqref{Pet} and \eqref{V} we obtain 
	
	\begin{equation}\label{euler1}
		||e(T\F)||_{L^{2}}  \leq \frac{1}{2\pi}||R^-||_{L^2}  +  4C_{\bf{\Gamma}}\frac{\Lambda}{\sqrt{Vol(M^3)}} 
	\end{equation}
	
	Since $R^-\geq 6 k_0$,  together with \eqref{length} this implies 
	
	\begin{equation}\label{euler1}
		||e(T\F)||_{L^{2}}  \leq -\frac{3}{\pi}k_0\sqrt{V_0}  +   \frac{32 H^2_0V_0^{\frac{3}{2}}} {3C^3_0}{\Lambda},
	\end{equation}
 where the constant $C_0$ is defined in \eqref{C_0}. Thus, putting $$C_1:= -\frac{3}{\pi}k_0\sqrt{V_0}  +   \frac{32 H^2_0V_0^{\frac{3}{2}}} {3C^3_0}{\Lambda},$$ we obtain the statement of  Theorem \ref{main}.

	\vspace{2cm}
	
	The author thanks A. Borisenko, S. Maksimenko and V. Yaskin for useful comments.


	


	
	
	


\EndPaper


\end{document}